\DeclareMathOperator{\diff}{diff}
\DeclareMathOperator{\mad}{mad}
\DeclareMathOperator{\EEu}{EE}
\DeclareMathOperator{\OEu}{OE}
\newtheorem{theorem}{Theorem}[section]
\newtheorem*{AlonTarsi}{Alon-Tarsi Theorem}
\newtheorem{lemma}[theorem]{Lemma}
\newtheorem{corollary}[theorem]{Corollary}
\newtheorem{claim}[theorem]{Claim}
\newtheorem{observation}[theorem]{Observation}
\newtheorem{definition}[theorem]{Definition}
\numberwithin{equation}{section}
\title{On Alon-Tarsi orientations of sparse graphs}
\author{Eun-Kyung Cho\thanks{
Department of Mathematics, Hanyang University, Seoul, Republic of Korea.
 \texttt{ekcho2020@gmail.com}
}
\and Ilkyoo Choi\thanks{
Department of Mathematics, Hankuk University of Foreign Studies, Yongin-si, Gyeonggi-do, Republic of Korea. 
 and  Discrete Mathematics Group, Institute for Basic Science (IBS), Daejeon, Republic of Korea.
 \texttt {ilkyoo@hufs.ac.kr}
 }
\and Boram Park\thanks{Department of Mathematics Education, Seoul National University,  Gwanak-Ro 1, Gwanak-Gu, Seoul,  Republic of Korea. \texttt{borampark22@gmail.com}
}
\and Xuding Zhu\thanks{
School of Mathematical Sciences, Zhejiang Normal University, China.
\texttt{xdzhu@zjnu.edu.cn}.
}
}
\date\today
\begin{document}
 
\maketitle

\begin{abstract} 
Assume $G$ is a graph, $(v_1,\ldots,v_k)$ is a sequence of distinct vertices of $G$, and  $(a_1,\ldots,a_k)$ is an integer sequence with $a_i \in \{1,2\}$. We say $G$ is \emph{$(a_1,\ldots,a_k)$-list extendable} (respectively, \emph{$(a_1,\ldots,a_k)$-AT extendable}) with respect to $(v_1,\ldots,v_k)$ if $G$ is $f$-choosable (respectively, $f$-AT), where $f(v_i)=a_i $ for $i \in \{1,\ldots, k\}$, and $f(v)=3$ for $v \in V(G) \setminus \{v_1,\ldots, v_k\}$. 
Hutchinson proved that if $G$ is an outerplanar graph, then $G$ is $(2,2)$-list extendable with respect to $(x,y)$ for any vertices $x,y$. 
We  strengthen this result and prove that if $G$ is a $K_4$-minor-free graph, then $G$ is $(2,2)$-AT extendable with respect to $(x,y)$ for any vertices $x,y$. Then we characterize all  triples $(x,y,z)$ of a $K_4$-minor-free graph $G$ for which $G$ is $(2,2,2)$-AT extendable (as well as $(2,2,2)$-list extendable) with respect to $(x,y,z)$. We also characterize the pairs $(x,y)$ of a $K_4$-minor-free graph $G$ for which $G$ is $(2,1)$-AT extendable (as well as $(2,1)$-list extendable) with respect to $(x,y)$.
Moreover, we characterize all triples $(x,y,z)$ of a 3-colorable graph $G$ with its maximum average degree less than $\frac{14}{5}$ for which $G$ is $(2,2,2)$-AT extendable with respect to $(x,y,z)$.
\end{abstract}
 
\section{Introduction}

Let $\mathbb{N}$ denote the set of positive integers, and for $k\in \mathbb{N}$, define $[k]$ to be the set $\{1, \ldots, k\}$.
Throughout the paper, we assume that $G$ is a simple graph unless stated otherwise.
Given $G$, let $V(G)$ and $E(G)$ denote its vertex set and edge set, respectively.
For an integer $k$, a vertex $v$ of $G$ is  a {\em $k$-vertex} (respectively, a {\em $k^+$-vertex} or a {\em $k^-$-vertex}) if $d_G(v)=k$  (respectively, $d_G(v)  \ge k$ or $d_G(v) \le k$). 
A {\em proper coloring} of $G$ is a function  $\phi: V(G) \to \mathbb{N}$ such that $\phi(u) \ne \phi(v)$ for each edge $uv$ of $G$. 
Given $k \in \mathbb{N}$, we say $G$ is {\em $k$-colorable} if $G$ has a proper coloring $\phi$ such that $\phi(V(G)) \subseteq [k]$.
A {\em list assignment} $L$ of $G$ is a function on $V(G)$  that assigns a list $L(v)\subseteq\mathbb N$ of \emph{available colors} to each vertex $v \in V(G)$.
Given a list assignment $L$ of $G$, an {\em $L$-coloring} $\varphi$ of $G$  is a proper coloring of $G$ such that $\varphi(v) \in L(v)$ for each vertex $v \in V(G)$. 
Let $\mathbb{N}^G$ be the set of all mappings $f: V(G) \to \mathbb{N}$. 
For a mapping $f\in\mathbb{N}^G$, we say $G$ is {\em $f$-choosable}
if $G$ has an $L$-coloring for every list assignment $L$ of $G$ for which $|L(v)| \ge f(v)$.
If $f$ is a constant function with value $k \in \mathbb{N}$, then we say an $f$-choosable graph $G$ is {\em $k$-choosable}.
The {\em list chromatic number} of $G$, denoted $\chi_l(G)$, is the minimum $k$ such that $G$ is $k$-choosable. List coloring of graphs has been studied extensively in the literature~\cite{TuzaSurvey}. 
A useful tool in the study of list coloring is the Combinatorial Nullstellensatz, and its associated Alon-Tarsi orientations of graphs.   

\begin{definition}
    \label{def-AT}  \rm
    Let $D$ be an orientation (of edges) of $G$. 
    An {\em Eulerian sub-digraph} of $D$ is a spanning sub-digraph $F$ of $D$ with $d_{F}^+(v) =  d_{F}^-(v)$ for every vertex $v \in V(G)$. Let $\EEu(D)$ (respectively, $\OEu(D)$) denote the set 
    of Eulerian sub-digraphs with an even (respectively, odd) number of arcs. 
    Let $$\diff(D)=|\EEu(D)|-|\OEu(D)|.$$ We say $D$ is an {\em Alon-Tarsi orientation} (shortened as {\em AT-orientation})  if $\diff(D)\neq 0$. 
    For a mapping $f\in\mathbb{N}^G$, we say $G$ is {\em $f$-Alon-Tarsi} (shortened as {\em $f$-AT}) if  $G$ has an AT-orientation $D$ with $d_D^+(v) \le  f(v)-1$ for each vertex $v \in V(G)$. 
    If $f$ is a constant function with value $k \in \mathbb{N}$, then we say an $f$-AT graph $G$ is $k$-AT.
    The {\em Alon-Tarsi number} of $G$, denoted  $AT(G)$,  is the minimum integer $k$ such that $G$ is $k$-AT.
\end{definition}

 \begin{AlonTarsi}[\cite{alon1992colorings}]\label{thm:AT}
If $G$ is $f$-AT, then $G$ is $f$-choosable. In particular, $\chi_l(G) \le AT(G)$.
\end{AlonTarsi}

The Alon-Tarsi number of a graph $G$ is not only an upper bound for both the {list chromatic} number of $G$ and the online {list chromatic} number of $G$~\cite{ZB-Book}, but also  a graph invariant of independent interest. 
A natural question is whether some upper bounds for the list chromatic number of graphs are also upper bounds for the Alon-Tarsi number. 
A classical result of  
Thomassen~\cite{thomassen1994every} says that every planar graph is $5$-choosable. This result was strengthened in~\cite{zhu2019alon}, where it was proved that every planar graph has Alon-Tarsi number at most $5$. 
 Indeed, Thomassen's classical result is stronger than the statement that every planar graph is $5$-choosable:   if $G$ is a plane graph with boundary cycle $(v_1v_2\ldots v_n)$, then   $G - v_1v_2$ is $f$-choosable, where $f(v_1)=f(v_2)=1$, $f(v_i)=3$ for $i \in \{3,\ldots, n\}$, and $f(v) =5$ for each interior vertex $v$.
 This stronger and more technical result is  useful in many cases, say for example in the study of choosability of locally planar graphs~\cite{DKM2008}. The result in~\cite{zhu2019alon} also says that $G-v_1v_2$ is $f$-AT for the same aforementioned function $f$.

In this paper, we are interested in list colorings and Alon-Tarsi orientations of $K_4$-minor-free graphs. It is well-known and easy to verify that $K_4$-minor-free graphs are $2$-degenerate, and hence has {list chromatic} number, as well as Alon-Tarsi number, at most $3$. We are interested in the problem whether a $K_4$-minor-free graph $G$ is $f$-choosable, or $f$-AT, for some $f \in \mathbb{N}^G$, 
 with $f(v) \le 3$ for every vertex $v$ of $G$, and $f(v) < 3$ for some vertices $v$ of $G$.

\begin{definition}
    \label{def-ext} \rm
    Assume $G$ is a graph, $(v_1,\ldots, v_k)$ is a $k$-tuple of distinct vertices of $G$, and $(a_1,\ldots, a_k)$ is a sequence of integers with $a_i \in [2]$ for $i \in [k]$. 
    Define $f\in\mathbb{N}^G$ as  $f(v_i)=a_i$ for $i \in [k]$ and $f(v)=3$ for $v \in V(G) \setminus \{v_1,\ldots, v_k\}$. If $G$ is $f$-choosable (respectively, $f$-AT), then we  say $G$ is {\em $(a_1,\ldots, a_k)$-list extendable} (respectively, {\em $(a_1,\ldots, a_k)$-AT extendable}) with respect to $(v_1,\ldots, v_k)$. An $f$-AT orientation of $G$ is called an {\em $(a_1,\ldots, a_k)$-AT orientation} of $G$ with respect to $(v_1,\ldots, v_k)$.
\end{definition}

Hutchinson~\cite{hutchinson2012list} first studied 
$f$-choosability of outerplanar graphs.
Hutchinson proved that all outerplanar graphs are $(2, 2)$-list extendable with respect to any pair of vertices $(x,y)$, and presented necessary and sufficient conditions for an outerplanar graph $G$ to be $(2,1)$-list extendable or $(1,1)$-list extendable with respect to $(x,y)$.

The results in this paper extend Hutchinson's results in two aspects: (1) we consider a more general class of graphs, from outerplanar graphs to $K_4$-minor-free graphs, or graphs with bounded maximum average degree and (2) prove stronger statements, from list extendability to AT extendability.
More precisely, we first extend Hutchinson's result to $K_4$-minor-free graphs, and strengthen the $(a,b)$-list extendable results to $(a,b)$-AT extendable  results. 
Then for an arbitrary $K_4$-minor-free graph $G$, we characterize all triples $(x,y,z)$ for which $G$ is $(2,2,2)$-list extendable, as well as $(2,2,2)$-AT extendable. Lastly, we discuss a similar question in the context of graphs with bounded maximum average degree.

\subsection{$(2,2)$-AT extendability of $K_4$-minor-free graphs}

To state our result, we need more definitions.
For $n\in\mathbb{N}$, let $D_n$ be the graph with    $$V(D_n) = \{ u_i, v_i,w_i: i \in [n]\} \cup \{u_0\}, \,\hfill\, 
E(D_n) = \{u_{i-1}v_i, u_{i-1}w_i, v_iw_i,v_i u_i, w_i  u_i : i \in [n]\}.$$ 
    For $n\in\mathbb N$, the graph $D_n$ is called a {\em chain of diamonds} (see   \Cref{fig:diamond}). 
Let $U(D_n)$ denote the set $\{u_0,u_1,\ldots,u_n\}$ of vertices of $D_n$.
\begin{figure}[h!]
    \centering
    \includegraphics{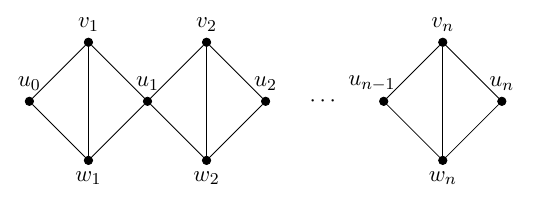}
    \caption{The graph $D_n$, a chain of diamonds}
    \label{fig:diamond}
\end{figure}  

\begin{definition} \rm
    Let $G$ be a graph. A set $X$ of distinct vertices of $G$ is said to be {\em connected by a chain of diamonds} if there is $n\in\mathbb N$ and a homomorphism $\varphi$ from a chain of diamonds $D_n$  to $G$ such that $X \subseteq  \varphi(U(D_n))$.   
\end{definition}

  \begin{observation}
      \label{obs:1} \rm
Observe that if $\phi$ is a proper $3$-coloring of a chain of diamonds $D_n$, then $\phi(u) = \phi(u')$ for all $u,u'\in U(D_n)$. Hence if a subset $X$ of vertices of a $3$-colorable graph $G$ is connected by a chain of diamonds, then all vertices in $X$ are colored by the same color in every proper 3-coloring of $G$. 
 \end{observation}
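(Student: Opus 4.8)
The plan is to prove the two sentences in turn, since the second follows from the first by composing the coloring with the witnessing homomorphism.

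For the first statement, I would analyze a single diamond and then propagate along the chain. Fix a proper $3$-coloring $\phi$ of $D_n$ and an index $i \in [n]$. In the $i$-th diamond, the vertices $v_i$ and $w_i$ are adjacent to each other and each is adjacent to both $u_{i-1}$ and $u_i$. Hence $\{\phi(v_i),\phi(w_i)\}$ consists of two distinct colors, and both $u_{i-1}$ and $u_i$ must avoid these two colors. Since only three colors are available, $u_{i-1}$ and $u_i$ are forced to receive the single remaining color, so $\phi(u_{i-1})=\phi(u_i)$. Applying this for $i=1,\dots,n$ and chaining the resulting equalities (a trivial induction on $i$) yields $\phi(u_0)=\phi(u_1)=\cdots=\phi(u_n)$; that is, $\phi$ is constant on $U(D_n)$.

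For the second statement, suppose $X$ is connected by a chain of diamonds, witnessed by a homomorphism $\varphi\colon D_n\to G$ with $X\subseteq\varphi(U(D_n))$, and let $\psi$ be an arbitrary proper $3$-coloring of $G$. Because $\varphi$ is a homomorphism, every edge of $D_n$ is mapped to an edge of $G$; and because $\psi$ assigns distinct colors to the two endpoints of each edge of $G$, the composition $\psi\circ\varphi$ assigns distinct colors to the two endpoints of each edge of $D_n$. Thus $\psi\circ\varphi$ is a proper coloring of $D_n$ using at most three colors, so the first statement applies and $\psi\circ\varphi$ is constant on $U(D_n)$. Consequently $\psi$ takes a single value on $\varphi(U(D_n))$, hence on the subset $X$, as desired.

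There is no genuine obstacle here, as this is an elementary observation. The only point requiring a moment of care is the verification that $\psi\circ\varphi$ really is a proper coloring of $D_n$, which rests on the standard fact that precomposing a proper coloring with a graph homomorphism produces a proper coloring of the domain; everything else is the three-color forcing argument inside one diamond together with a straightforward propagation along the chain.
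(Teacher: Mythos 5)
Your proof is correct and follows exactly the reasoning the paper leaves implicit (the paper states this as an observation without a written proof): the two triangles of each diamond force $u_{i-1}$ and $u_i$ to the unique third color, and pulling back a proper $3$-coloring of $G$ along the homomorphism transfers this to $X$. Both the single-diamond forcing step and the composition-with-homomorphism step are handled rigorously, so nothing is missing.
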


Our first result extends the work of~\cite{hutchinson2012list} to AT-extendability of $K_4$-minor-free graphs.

\begin{theorem}\label{thm:21}
Assume $G$ is  a $K_4$-minor-free graph and   $x,y$ are distinct vertices of $G$. Then $G$ is $(2,2)$-AT extendable with respect to $(x,y)$. Moreover, if $\{x,y\}$ is not connected by a chain of diamonds, then $G$ is $(2,1)$-AT extendable with respect to $(x,y)$.
\end{theorem}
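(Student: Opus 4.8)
The plan is to prove both statements by induction on $|V(G)|$, establishing the unconditional $(2,2)$ part first and then bootstrapping it to obtain the conditional $(2,1)$ part. Throughout, I seek an orientation $D$ with $\diff(D)\neq 0$ and prescribed out-degree caps: every vertex other than $x,y$ may have out-degree at most $2$, while $x$ is capped at $1$ and $y$ at $1$ (for the $(2,2)$ claim) or at $0$, i.e.\ $y$ is a sink (for the $(2,1)$ claim). The governing arithmetic is that a $K_4$-minor-free graph on $n$ vertices has at most $2n-3$ edges, so the total out-degree budget $2(n-2)+1+1=2n-2$ leaves one unit of slack for $(2,2)$, whereas the budget $2(n-2)+1+0=2n-3$ for $(2,1)$ is exactly tight on edge-maximal graphs; this tightness is what forces a rigid orientation and creates room for the chain-of-diamonds obstruction. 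I will also reduce to connected $G$, handling components that contain neither or only one of $x,y$ by the corresponding zero- or one-terminal versions, which are easier owing to the additional slack.

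The workhorse is a clean reduction at any vertex $z\notin\{x,y\}$ with $d_G(z)\le 2$: orient every edge at $z$ \emph{outward}. Then $d_D^+(z)\le 2$ respects its cap, $z$ lies on no directed cycle, so $\diff(D)=\diff(D-z)$, and—crucially—the out-degrees of the neighbors of $z$ are untouched (the new arcs contribute only to their in-degrees). Hence the caps of all surviving vertices are preserved verbatim, and I may recurse on the $K_4$-minor-free graph $G-z$. Iterating, I reduce to the \emph{irreducible} case in which every vertex outside $\{x,y\}$ has degree at least $3$; since a $K_4$-minor-free graph is $2$-degenerate, at least one of $x,y$ then has degree at most $2$, and the reduction must instead be performed at $x$ or at $y$. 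Note that no purely acyclic orientation suffices here: already on the single diamond $D_1$, every vertex either has degree $3$ (forcing out-degree $3$ if placed first) or is a degree-$2$ terminal (forcing out-degree $2$, over its cap), so a valid orientation must be genuinely non-acyclic.

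The reductions at $x$ and $y$ are where the two parts of \Cref{thm:21} interlock, because orienting an edge at a capped vertex either spends that vertex's small budget or loads a unit of out-degree onto a neighbor. For example, in the $(2,1)$ claim a degree-$1$ vertex $y$ with neighbor $a$ must be made a sink by orienting $a\to y$; this preserves $\diff$ but consumes one unit of $a$'s budget, reducing the problem precisely to the $(2,2)$ claim for $G-y$ with respect to $(x,a)$, which the first part of the theorem supplies—and this is consistent, since a degree-$1$ vertex can never be some $\varphi(u_i)$ (each $u_i$ has two adjacent, hence distinct-image, neighbors in $D_n$), so $\{x,y\}$ is automatically not connected by a chain of diamonds. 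The delicate configuration is a degree-$2$ vertex (say $y$) whose neighbors $a,b$ are adjacent, so $\{y,a,b\}$ is a triangle: making $y$ a sink forces $a\to y$ and $b\to y$ and loads both $a$ and $b$, after which the remaining triangle must be oriented under tightened caps. A direct computation on $D_1$ (with $x,y$ its two degree-$2$ vertices) shows the surviving edges are forced into a directed triangle, giving $\EEu(D)$ and $\OEu(D)$ of equal size and hence $\diff(D)=0$; this is exactly the obstruction, realized precisely when $\{x,y\}$ is connected by a chain of diamonds. For $(2,2)$ the extra unit of slack lets me leave one vertex below its cap and break this forced triangle, yielding an orientation with $\diff(D)\neq 0$ (the explicit orientation of $D_1$ with $\diff(D)=-1$ illustrates this), which is why the $(2,2)$ statement holds unconditionally. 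Base cases—$G$ a single edge, or $\{x,y\}$ with at most one diamond between them—are checked directly from \Cref{def-AT}, with \Cref{obs:1} indicating which triangles are forced.

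The main obstacle will be the analysis of the degree-$2$ reductions at the capped vertices in the tight $(2,1)$ regime: unlike the clean reduction, these create directed cycles through the reduced vertex, so $\diff$ is no longer simply inherited and must be tracked through the bijection between Eulerian sub-digraphs of $G$ and of the graph obtained by contracting the two forced arcs at the sink. Making this bookkeeping match the combinatorial condition that $\{x,y\}$ be connected by a chain of diamonds \emph{on the nose}—so that the obstruction appears if and only if the hypothesis fails—is the crux, and is presumably organized as an induction on the number of diamonds in the chain, each reduction peeling off one diamond and shortening the chain until a trivial base case is reached.
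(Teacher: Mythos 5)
Your setup matches the paper's: induction on $|V(G)|$ with the two statements proved simultaneously, source-reductions at $2^-$-vertices outside $\{x,y\}$ (which indeed preserve $\diff$), and degree-$1$ reductions at the terminals that trade the $(2,1)$ problem for the $(2,2)$ problem on a new pair. Up to that point the proposal is sound and is essentially what the paper does. But the theorem's entire difficulty lives in the configuration reached afterwards --- $G$ connected, not a cycle, $x,y$ of degree $2$, every other vertex of degree at least $3$ --- and there your proposal has a genuine gap. First, you only discuss a terminal $2$-vertex whose two neighbors are adjacent; a $2$-vertex whose neighbors are non-adjacent is never treated, and the natural move there (orient $u_1\to x\to u_2$, i.e.\ contract $x$ to an edge) does \emph{not} preserve $\diff$: every Eulerian sub-digraph through the path $u_1xu_2$ has its parity shifted by one relative to the contracted graph, so $\diff$ of the contracted orientation can be nonzero while $\diff$ of the original vanishes. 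The paper excludes this configuration by a structural lemma (\Cref{lem:genuine}, proved via the planarity-based \Cref{obs:non-adjacent}): in the irreducible configuration both $x$ and $y$ are \emph{genuine} $2$-vertices, i.e.\ they lie on triangles with a prescribed second layer of adjacencies. Nothing in your proposal supplies this fact or a substitute for it.

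Second, even in the triangle case you only identify the obstruction (the forced orientation with $\diff=0$ when $\{x,y\}$ is connected by a chain of diamonds); you do not say how to produce an AT-orientation when the obstruction is absent, which is the actual content of the theorem. The paper does this with a $\diff$-preserving triangle reduction (\Cref{lem:triangle}): delete $x$ and its degree-$3$ triangle-neighbor $u_1$, apply induction to $G-\{x,u_1\}$ with the new terminal pair $(w_1,y)$, where $w_1$ is $u_1$'s third neighbor, and re-attach four explicitly oriented arcs; a parity-swapping involution shows $\diff$ is unchanged even though the re-attached arcs create directed cycles through the deleted vertices. The bookkeeping you call the crux is then exactly the observation that $\{x,w_1\}$ is connected by a single diamond (so if $\{x,y\}$ is unobstructed, so is $\{w_1,y\}$), together with, in the second genuine configuration, a $3$-colorability argument showing that at least one of $\{w_1,y\}$, $\{w_2,y\}$ is unobstructed. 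Your proposed substitute --- ``induction on the number of diamonds in the chain'' --- is not a workable organizing principle: the chain of diamonds is a coloring obstruction that need not be present at all (that is precisely the case you must handle), and the working induction is on $|V(G)|$, with the terminal migrating along genuine triangles. Without \Cref{lem:genuine} and \Cref{lem:triangle}, or equivalents, the proof does not close.
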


The following corollary holds from \Cref{thm:21}.

\begin{corollary}
    \label{cor:21}
 Assume $G$ is a $K_4$-minor-free graph and $x,y$ are distinct vertices of $G$. Then   
the following are equivalent:
\begin{enumerate}[(1)]
    \item $G$ is $(2,1)$-AT extendable with respect to $(x,y)$.
    \item $G$ is $(2,1)$-list extendable with respect to $(x,y)$.
    \item $\{x,y\}$ is not connected by a chain of diamonds.
\end{enumerate}
\end{corollary}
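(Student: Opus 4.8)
The plan is to close the cycle of implications $(1)\Rightarrow(2)\Rightarrow(3)\Rightarrow(1)$, of which two directions are essentially free. The implication $(1)\Rightarrow(2)$ is immediate from the Alon--Tarsi Theorem: if $G$ is $(2,1)$-AT extendable with respect to $(x,y)$, meaning $G$ is $f$-AT for the function $f$ with $f(x)=2$, $f(y)=1$, and $f(v)=3$ otherwise, then $G$ is $f$-choosable, i.e.\ $(2,1)$-list extendable with respect to $(x,y)$. The implication $(3)\Rightarrow(1)$ is exactly the ``moreover'' clause of \Cref{thm:21}. So the only real content is $(2)\Rightarrow(3)$, which I would prove by contraposition: assuming $\{x,y\}$ \emph{is} connected by a chain of diamonds, I will exhibit a single list assignment $L$, of the sizes required by $(2,1)$-list extendability, that admits no $L$-coloring.

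For the construction, let $\varphi$ be a homomorphism from a chain of diamonds $D_n$ into $G$ with $\{x,y\}\subseteq\varphi(U(D_n))$, so that $x=\varphi(u_i)$ and $y=\varphi(u_j)$ for some indices $i,j$. Define $L$ by $L(y)=\{1\}$, $L(x)=\{2,3\}$, and $L(v)=\{1,2,3\}$ for every other vertex $v$; note $|L(y)|\ge 1$, $|L(x)|\ge 2$, and $|L(v)|\ge 3$ otherwise, as demanded by $(2,1)$-list extendability. Any $L$-coloring of $G$ uses only colors in $\{1,2,3\}$ and is therefore a proper $3$-coloring. Since $K_4$-minor-free graphs are $2$-degenerate and hence $3$-colorable, \Cref{obs:1} applies: because $\{x,y\}$ is connected by a chain of diamonds, every proper $3$-coloring of $G$ assigns $x$ and $y$ the same color. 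But $L$ forces the color of $y$ to be $1$ while forbidding the color $1$ at $x$, so no proper $3$-coloring can agree on $x$ and $y$. Hence no $L$-coloring exists, $G$ is not $(2,1)$-list extendable with respect to $(x,y)$, and $(2)\Rightarrow(3)$ follows.

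I expect no serious obstacle here: the two easy implications are quotations of earlier results, and the construction in $(2)\Rightarrow(3)$ needs only \Cref{obs:1} together with the fact that an $L$-coloring drawn from the palette $\{1,2,3\}$ is a genuine proper $3$-coloring. The one point requiring a moment's care is that the relevant forcing statement of \Cref{obs:1} concerns \emph{every} proper $3$-coloring (not merely the existence of one), so that it may legitimately be applied to a hypothetical $L$-coloring; this is precisely how the observation is phrased, and the $3$-colorability of $G$ guarantees that the framework is not vacuous.
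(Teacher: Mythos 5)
Your proposal is correct and matches the paper's proof in all essentials: both dispose of $(1)\Rightarrow(2)$ via the Alon--Tarsi Theorem and $(3)\Rightarrow(1)$ via the ``moreover'' part of \Cref{thm:21}, and both establish $(2)\Rightarrow(3)$ by giving $x$ and $y$ disjoint lists of sizes $2$ and $1$ and invoking \Cref{obs:1}. The only difference is presentational: the paper argues directly (extendability yields a proper $3$-coloring with $\phi(x)\ne\phi(y)$, so \Cref{obs:1} rules out a chain of diamonds), while you phrase the same argument contrapositively.
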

\begin{proof} (3) $\Rightarrow$ (1) follows from \Cref{thm:21}, and  (1) $\Rightarrow$ (2) follows from the Alon-Tarsi Theorem.
To show (2) $\Rightarrow$ (3), assume $G$ is $(2,1)$-list extendable with respect to $(x,y)$.
Let $L(x) = \{1,2\}$, $L(y) = \{3\}$, and $L(v) = \{1,2,3\}$ for all $v \in V(G) \setminus \{x,y\}$.
Then $G$ has an $L$-coloring $f$, which is a proper 3-coloring of $G$ with $f(x) \ne f(y)$. By  \Cref{obs:1},  $\{x,y\}$ is not connected by a chain of diamonds.
\end{proof}

\subsection{$(2,2,2)$-AT extendability of $K_4$-minor-free graphs}

Our second result considers $(2,2,2)$-AT extendability of $K_4$-minor-free graphs and prove the following result.

\begin{definition}
    \label{def-feasible} \rm
    A set $X$ of three distinct vertices of $G$ is  {\em feasible} if $X$ is not connected by a chain of diamonds and there is a proper $3$-coloring $\phi$ of $G$ for which $|\phi(X)| \le 2$.
\end{definition}

\begin{theorem}\label{thm:k4minorfree}
    Assume $G$ is  a $K_4$-minor-free graph  and $x,y,z$ are distinct vertices of $G$. 
    If $\{x,y,z\}$ is feasible, then  $G$ is $(2,2,2)$-AT extendable with respect to $(x,y,z)$. 
\end{theorem}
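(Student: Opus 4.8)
The goal is to exhibit an orientation $D$ of $G$ with $d_D^+(x),d_D^+(y),d_D^+(z)\le 1$, with $d_D^+(v)\le 2$ for every other vertex, and with $\diff(D)\neq 0$. I would rely on three facts about $\diff$. First, a \emph{sink/source lemma}: if $v$ is a sink (or source) of $D$, then every Eulerian sub-digraph avoids the edges at $v$, so $\diff(D)=\diff(D-v)$. Second, a \emph{cut-vertex factorization}: if $c$ is a cut vertex and $D_1,D_2$ are the induced orientations of the two sides, then every Eulerian sub-digraph splits independently across $c$, giving $\diff(D)=\diff(D_1)\diff(D_2)$. Third, \Cref{thm:21}, which already settles any block carrying at most two of the terminals. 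The plan is to peel $G$ down to a single $2$-connected piece holding all three terminals and treat that piece directly.

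The reductions come first, by induction on $|V(G)|$. If a non-terminal vertex $v$ has degree at most $2$, orient it as a source (out-degree $\le 2$) and delete it: the source lemma preserves $\diff$, and feasibility is inherited, since any homomorphism $D_n\to G-v$ is one into $G$ (so $\{x,y,z\}$ stays un-chained) and the witnessing $3$-coloring restricts. If a terminal, say $z$, has degree at most $1$, orient its edge out of $z$ so $d^+(z)\le 1$, delete $z$, and finish on $G-z$ by the $(2,2)$ part of \Cref{thm:21} applied to $(x,y)$; again $\diff$ is unchanged. After these steps every non-terminal has degree at least $3$ and every terminal degree at least $2$. Next I pass to the block-cut-tree: by the cut-vertex factorization it suffices to orient each block with $\diff\neq 0$ so that at every cut vertex the block out-degrees sum to the allowed budget. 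I would fix a proper $3$-coloring $\phi$ witnessing feasibility and use it to distribute budgets, routing the terminals so that each block meets at most two constrained vertices; such blocks are handled by \Cref{thm:21}, and whenever a cut vertex must become a sink in all but one block I invoke the $(2,1)$ part of \Cref{thm:21} through \Cref{cor:21}, which is exactly where the hypothesis that $\{x,y,z\}$ is not connected by a chain of diamonds is consumed.

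The remaining, and hardest, case is a single $2$-connected $K_4$-minor-free (hence series--parallel) graph containing all of $x,y,z$. Here acyclic orientations do not suffice: already for the single diamond $D_1$ with terminals at $u_0,v_1,u_1$ the budgets force out-degrees $(1,1,1,2)$, for which no acyclic orientation exists, yet a non-acyclic orientation carrying one directed $4$-cycle has $|\EEu(D)|=2$ and $|\OEu(D)|=1$, so $\diff(D)=1$. I would therefore induct through the series--parallel structure, which always offers either a pair of parallel paths to split off or an internal degree-$2$ vertex to suppress; in each case $\diff$ of the whole is expressible through $\diff$ of the reduced graph together with the local orientation at the affected edges, and the terminals are tracked through the reduction. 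The crux is that the two feasibility conditions must survive each reduction and, at the base, combine to force $\diff\neq 0$: the $3$-coloring with $|\phi(\{x,y,z\})|\le 2$ supplies the directed cycle that makes one of $\EEu$, $\OEu$ strictly dominate, while the absence of a chain of diamonds through $U(D_n)$ is precisely what prevents the cancellation that a forced-monochromatic triple would create. Controlling $\diff$ through the series reduction for these possibly non-acyclic orientations, and checking feasibility is preserved at every step, is the main obstacle; everything else is bookkeeping on top of the sink/source lemma, the cut-vertex factorization, and \Cref{thm:21}.
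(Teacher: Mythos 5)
Your reductions (deleting $2^-$-degree non-terminals as sources, handling a degree-$1$ terminal via \Cref{thm:21}, and the cut-vertex factorization, which is exactly \Cref{lem:vxcut}) are correct, and your diamond example correctly shows that non-acyclic orientations are unavoidable. But the proof has a genuine gap, and you have located it yourself: the case of a $2$-connected piece containing all three of $x,y,z$ is the entire content of the theorem, and your proposal ends by naming its resolution as ``the main obstacle'' rather than resolving it. The series--parallel induction you sketch has no mechanism: you do not say how $\diff$ of the whole graph is computed from $\diff$ of the reduced graph under a series or parallel reduction when the orientation is not acyclic, you do not identify the base cases, and the two sentences claiming that the witnessing $3$-coloring ``supplies the directed cycle'' and that the absence of a chain of diamonds ``prevents the cancellation'' are hopes, not arguments --- nothing in the definition of $\diff$ connects a proper coloring to a directed cycle, or a diamond chain to a cancellation, without substantial further work. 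There is also a structural problem with the plan itself: after your reductions every non-terminal has degree at least $3$, so in a $2$-connected series--parallel piece the degree-$2$ vertices that such a graph must contain are precisely the terminals, whose out-degree budget is $1$; hence the ``internal degree-$2$ vertex to suppress'' that your induction relies on is exactly what you are not allowed to suppress freely. Similarly, the block-tree step presumes you can always route budgets so that each block sees at most two constrained vertices, and that the relevant pairs inside each block are not connected by chains of diamonds so that the $(2,1)$ part of \Cref{thm:21} applies; neither is justified.

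For comparison, the paper closes precisely this hole with two devices your proposal lacks. First, \Cref{lem:genuine}: in a minimal counterexample (minimum degree $2$, all non-terminals of degree $\ge 3$, hence at most three $2$-vertices), some terminal, say $x$, is a \emph{genuine} $2$-vertex --- it lies on a triangle $[xu_1u_2]$ with $d_G(u_1)=3$, $N_G(u_1)=\{x,u_2,w_1\}$, and a prescribed adjacency among $u_2,w_1,w_2$. Second, \Cref{lem:triangle}: deleting $x$ and $u_1$ and orienting the five local edges in a fixed, deliberately non-acyclic pattern changes $\diff$ not at all, so the induction hypothesis applied to $G-\{x,u_1\}$ (or $G-\{x,u_1,u_2\}$) with the new terminal $w_1$ or $w_2$ lifts back to $G$. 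The feasibility of the new triple is then not a matter of hoping it ``survives'': it is verified case by case using \Cref{obs:1} (the diamond $xu_1u_2w_1$ forces $\phi(x)=\phi(w_1)$ in every $3$-coloring), \Cref{lem-feasible}, and \Cref{cor:xx'}, with \Cref{thm:21} absorbing the cases where $w_1$ or $w_2$ collides with $y$ or $z$. That local, coloring-free surgery is what replaces your global series--parallel induction, and it is the part your proposal would still need to invent.
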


As a corollary of \Cref{thm:k4minorfree}, the following holds.

\begin{corollary}\label{cor:222}
    Assume $G$ is  a $K_4$-minor-free graph  and $x,y,z$ are distinct vertices of $G$. Then 
 the following are equivalent:
\begin{enumerate}[(1)]
    \item $G$ is $(2,2,2)$-AT extendable with respect to $(x,y,z)$.
    \item $G$ is $(2,2,2)$-list extendable with respect to $(x,y,z)$.
 \item $\{x,y,z\}$ is feasible. 
\end{enumerate}
\end{corollary}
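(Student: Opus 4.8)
The plan is to prove the equivalence by the cycle $(3)\Rightarrow(1)\Rightarrow(2)\Rightarrow(3)$, exactly as in the proof of \Cref{cor:21}. The implication $(3)\Rightarrow(1)$ is precisely the content of \Cref{thm:k4minorfree}, and $(1)\Rightarrow(2)$ is immediate from the Alon-Tarsi Theorem, since $f$-AT implies $f$-choosable for the function $f$ with $f(x)=f(y)=f(z)=2$ and $f(v)=3$ elsewhere. So the only implication requiring any argument is $(2)\Rightarrow(3)$; all of the genuine difficulty of the statement has already been packaged into \Cref{thm:k4minorfree}, and the corollary itself is a routine unpacking of the definition of feasibility.

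For $(2)\Rightarrow(3)$, I would assume $G$ is $(2,2,2)$-list extendable with respect to $(x,y,z)$, so that $G$ admits an $L$-coloring for every list assignment $L$ with $|L(x)|,|L(y)|,|L(z)|\ge 2$ and $|L(v)|\ge 3$ for the remaining vertices. The strategy is to verify the two defining conditions of feasibility (\Cref{def-feasible}) separately by feeding two well-chosen list assignments into this hypothesis, in both cases drawing all lists from the palette $\{1,2,3\}$ so that every resulting $L$-coloring is an honest proper $3$-coloring and hence amenable to \Cref{obs:1}. To show $\{x,y,z\}$ is not connected by a chain of diamonds, I would take $L(x)=\{1,2\}$, $L(y)=\{2,3\}$, $L(z)=\{1,3\}$, and $L(v)=\{1,2,3\}$ otherwise; since $L(x)\cap L(y)\cap L(z)=\emptyset$, no $L$-coloring can assign all three of $x,y,z$ a common color, so any such coloring is a proper $3$-coloring witnessing—via the contrapositive of \Cref{obs:1}—that $\{x,y,z\}$ is not connected by a chain of diamonds. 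To produce a proper $3$-coloring $\phi$ with $|\phi(\{x,y,z\})|\le 2$, I would instead take $L(x)=L(y)=L(z)=\{1,2\}$ and $L(v)=\{1,2,3\}$ otherwise; any $L$-coloring then confines $x,y,z$ to two colors, so by pigeonhole at least two of them coincide. These two facts are exactly the two clauses of feasibility, completing the implication.

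The only point requiring care—and thus the nearest thing to an obstacle—is the asymmetry in the definition of feasibility, which bundles a positive requirement (some $3$-coloring collapses $\{x,y,z\}$ to two colors) with a negative one (no chain of diamonds forces the three equal). Both must be certified, and each is certified by a different list assignment. The subtlety is simply to keep every list inside $\{1,2,3\}$: if a fourth color were allowed, the resulting $L$-colorings would no longer be proper $3$-colorings and \Cref{obs:1} could not be invoked. With that constraint respected, the argument is short and entirely parallel to \Cref{cor:21}.
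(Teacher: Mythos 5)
Your proof is correct and follows essentially the same route as the paper: $(3)\Rightarrow(1)$ via \Cref{thm:k4minorfree}, $(1)\Rightarrow(2)$ via the Alon-Tarsi Theorem, and $(2)\Rightarrow(3)$ by feeding in two list assignments drawn from $\{1,2,3\}$ — one with all lists equal to $\{1,2\}$ to force $|\phi(\{x,y,z\})|\le 2$, and one with the three pairwise-distinct $2$-lists (empty common intersection) to rule out a chain of diamonds via \Cref{obs:1}. The paper's proof uses the identical pair of list assignments (up to relabeling of colors), so there is nothing to add.
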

\begin{proof}
(3) $\Rightarrow$ (1) follows from  \Cref{thm:k4minorfree}, and  (1) $\Rightarrow$ (2) follows from the Alon-Tarsi Theorem.
To show (2) $\Rightarrow$ (3), assume $G$ is $(2,2,2)$-list extendable with respect to $(x,y,z)$. Let $L(x)=L(y)=L(z)=\{1,2\}$ and $L(v) = \{1,2,3\}$ for all $v \in V(G) \setminus \{x,y,z\}$. Then $G$ has an $L$-coloring $\varphi$, which is a proper $3$-coloring of $G$ such that $|\{\varphi(x), \varphi(y), \varphi(z)\}| \le 2$. 
 Let $L'(x) = \{1,2\}$, $L'(y) = \{1,3\}$, $L'(z) = \{2,3\}$, and $L'(v) = \{1,2,3\}$ for all $v \in V(G) \setminus \{x,y,z\}$. Again $G$ has an $L'$-coloring $\phi$, which is a proper $3$-coloring of $G$ that uses at least two colors on $\{x,y,z\}$. By   \Cref{obs:1}, $\{x,y,z\}$ is not connected by a chain of diamonds. Hence, the set $\{x, y, z\}$ is feasible. 
\end{proof}

Theorem 
\ref{thm:k4minorfree} is tight in the sense that there are $K_4$-minor-free graphs $G$ such that $G$ is not $(2,2,1)$-list extendable with respect to $(x,y,z)$ for any distinct $x,y,z \in V(G)$. Indeed, any $K_4$-minor-free graph with a unique proper $3$-coloring has this property.

Here is a sketch of proof.
Let $G$ be a $K_4$-minor-free graph with a unique proper $3$-coloring $\phi$. 
If $ \phi(x) =  \phi(z)$, then let $L(x) = L(y) = \{1,2\}$, $L(z) = \{3\}$, and $L(v) = \{1,2,3\}$ for any other vertex $v$. It is easy to see that $G$ is not $L$-colorable, as any proper 3-coloring of $G$ colors $x, z$ with the same color. The case where $\phi(y) = \phi(z)$ is symmetric. 
    
Assume $\phi(z) \not\in \{\phi(x), \phi(y)\} $. If $\phi(x) \ne \phi(y)$, then let $L(x)=L(y)  = \{1,2\}$, $L(z) = \{1\}$, and $L(v) = \{1,2,3\}$ for any other vertex $v$. Then $G$ is not $L$-colorable. 
If $\phi(x) = \phi(y)$, then let $L(x)=\{1,2\}, L(y) = \{1,3\}, L(z) = \{1\}$, and $L(v) = \{1,2,3\}$ for any other vertex $v$. Again $G$ is not $L$-colorable.

In particular, $2$-trees are $K_4$-minor-free graphs that have a unique proper $3$-coloring. So they are not $(2,2,1)$-list extendable with respect to $(x,y,z)$ for any distinct vertices $x,y,z$. 

Despite the above observation, if a $K_4$-minor-free graph is triangle-free, then we have the following result.

\begin{theorem}\label{thm:tranglefree}
If $G$ is a triangle-free $K_4$-minor-free graph, then $G$ is $(2,2,1)$-AT extendable with respect to $(x,y,z)$ for every $x,y,z \in V(G)$.
\end{theorem}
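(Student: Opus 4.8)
The plan is to prove a stronger statement by induction on $|V(G)|$. Since $G$ is triangle-free it contains no triangle, hence no homomorphic image of any chain of diamonds $D_n$, so no set of vertices is connected by a chain of diamonds; this is exactly why the $(2,2,1)$-assignment should always be extendable. First I would reduce to the case that $G$ is connected: if $G$ has components $G_1,\dots,G_t$, then the Eulerian subdigraphs of an orientation split as a product over components, so $\diff$ is multiplicative and it suffices to orient each component separately. A component containing none of $x,y,z$ only needs an acyclic orientation with out-degrees at most $2$ (which exists since $G$ is $2$-degenerate, and acyclic orientations have $\diff=\pm1$); a component containing one or two of $x,y,z$ is handled by \Cref{thm:21} and \Cref{cor:21} (note that $\{x,y\}$, $\{x,z\}$, $\{y,z\}$ are never connected by a chain of diamonds); and the component containing all of $x,y,z$ is the main case.

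For the induction I would work with a general out-degree target $f\colon V(G)\to\{1,2,3\}$ and seek an AT-orientation $D$ with $d_D^+(v)\le f(v)-1$ for all $v$; the theorem is the case $f(z)=1$, $f(x)=f(y)=2$, and $f\equiv 3$ elsewhere. The natural hypothesis to carry along is the Hakimi-type sparsity condition
\[
|E(G[W])|\le \sum_{v\in W}\bigl(f(v)-1\bigr)\qquad\text{for every }W\subseteq V(G),
\]
which for the $(2,2,1)$-assignment follows from the bound $|E(G[W])|\le 2|W|-4$ for connected triangle-free $K_4$-minor-free graphs (with the weaker bound $2|W|-3$ handling the subsets that avoid enough of $\{x,y,z\}$). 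A pleasant point is that this condition is preserved by the reductions below: when I delete a vertex $u$ and decrement $f$ at its neighbors, the slack needed at those neighbors is supplied by applying the inequality for the original graph to the set $W\cup\{u\}$.

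The routine reductions are as follows. If $u$ is a leaf with neighbor $p$, I orient $u\to p$ when $f(u)\ge 2$, and $p\to u$ (decrementing $f(p)$, legitimate since the sparsity inequality on $\{u,p\}$ forces $f(p)\ge 2$) when $f(u)=1$; in either case $u$ is a source or a sink, so no Eulerian subdigraph uses its edge, $\diff$ is unchanged, and I recurse on $G-u$. If $u$ has degree $2$ with $f(u)=3$, I orient both edges out of $u$, making it a source, and recurse on $G-u$ with $f$ unchanged. The instructive example $G=K_{2,3}$ with $\{x,y,z\}$ the three degree-$2$ vertices shows these easy moves can all be unavailable: there every low-degree vertex is one of $x,y,z$, so the induction is forced to reduce a constrained vertex, which is precisely why the generalized function $f$ is necessary. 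The crux reduction handles a degree-$2$ vertex $u$ with $f(u)\le 2$, in particular a degree-$2$ sink: since $G$ is triangle-free its two neighbors $p,q$ are nonadjacent, so I orient both incident edges into $u$ (forced when $f(u)=1$), delete $u$, and decrement $f$ at $p$ and $q$; because $u$ becomes a sink its edges again lie in no Eulerian subdigraph, so $\diff$ is preserved and I may appeal to the smaller instance $(G-u,f')$.

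The main obstacle is keeping the orientation Alon-Tarsi, i.e.\ $\diff(D)\ne 0$, through the budget-decrementing steps; the sparsity condition alone does not suffice. Indeed $C_5$ with $f\equiv 2$ satisfies the inequality above, yet its only admissible orientation is a directed $5$-cycle, for which $|\EEu|=|\OEu|=1$ and $\diff=0$, so $C_5$ is not $f$-AT. Thus the induction must carry an auxiliary condition that forbids the reductions from ever forcing an odd directed cycle as an isolated ``tight'' piece. The heart of the proof is to formulate this condition and verify, using triangle-freeness (girth at least $4$) together with the series-parallel structure of $K_4$-minor-free graphs, both that the initial $(2,2,1)$-assignment satisfies it — which is easy, since only $x,y,z$ fail to have budget $3$, so no all-budget-$2$ cycle exists at the start — and that it persists under deletion of a constrained degree-$2$ vertex. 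Concretely, whenever the sparsity inequality becomes tight on a connected set $W$ the orientation is forced on $W$, and I would argue that triangle-freeness and the absence of a $K_4$-minor prevent $G[W]$ from being an odd cycle (or, more generally, from supporting only odd directed structure), so that a nonzero-$\diff$ orientation of the tight piece can always be selected; handling this tight case cleanly, and the parallel degree-$2$, $f(u)=2$ subcase where one must also decide which neighbor absorbs the decrement, is the part I expect to require the most care.
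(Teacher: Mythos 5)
Your proposal is not a proof: at its self-declared center there is a hole. You correctly observe that the Hakimi-type condition $|E(G[W])|\le\sum_{v\in W}(f(v)-1)$ cannot serve as the full induction hypothesis (your $C_5$ example is right: with $f\equiv 2$ the only admissible orientation is a directed $5$-cycle, whose $\diff$ is $0$), and you then state that the ``heart of the proof'' is an auxiliary invariant which rules out such tight pieces and persists under your reductions --- but you never formulate that invariant, let alone verify its persistence. That invariant is precisely the mathematical content of the theorem; the component reduction, the leaf and source steps, and the sink step are all routine. Moreover, even modulo the missing invariant, your ``crux reduction'' is unresolved in both of its branches. For a degree-$2$ vertex $u$ with $f(u)=2$: if you sink $u$ and decrement both neighbors, the total budget $\sum_v (f(v)-1)$ drops by $3$ while only $2$ edges are deleted, so the Hakimi condition itself can break (your slack argument via $W\cup\{u\}$ only balances when $f(u)=1$); if instead you orient one edge out of $u$ and one into $u$, then $u$ is neither a source nor a sink, Eulerian sub-digraphs can route through it, and $\diff(D)\ne\diff(D-u)$ in general --- you would have to suppress $u$ into an arc joining its neighbors, which can create triangles or parallel edges and exits the class in which your induction lives.

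The paper's proof of \Cref{thm:tranglefree} shows why none of this machinery is needed: budgets never have to be decremented. Take a minimal counterexample; the paper invokes the claim that a triangle-free $K_4$-minor-free graph on at least four vertices has at least four $2^-$-vertices, so some $2^-$-vertex $w$ lies outside $\{x,y,z\}$ and hence has full budget $3$; by minimality $G-w$ has a $(2,2,1)$-AT orientation, and orienting all edges at $w$ out of $w$ makes $w$ a source, so no Eulerian sub-digraph uses these arcs, $\diff$ is unchanged, and $d^+(w)\le 2$ is admissible. To your credit, your $K_{2,3}$ example is a fair objection to the paper as written: $K_{2,3}$ (and, say, $K_{2,3}$ with a pendant edge attached to one of its $2$-vertices) has only three $2^-$-vertices, so the paper's counting claim needs such exceptional configurations to be treated separately. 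But the treatment is a one-line explicit orientation, not a budget-decrementing induction: for $K_{2,3}$ with $x,y$ two of the degree-$2$ vertices and $z$ the third, orient a directed $4$-cycle through $x$, $y$ and the two $3$-vertices, and orient both remaining edges into $z$; the Eulerian sub-digraphs are then the empty one and the $4$-cycle, both even, so $\diff=2$. In short, where your plan differs from the paper it is either harder than necessary or incomplete, and the incomplete part is exactly the theorem.
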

\begin{proof}
Let $G$ be a triangle-free $K_4$-minor-free graph.
If $|V(G)| \le 3$, then \Cref{thm:tranglefree} holds.
Thus, we may assume that $|V(G)| \ge 4$.
Note that $G$ has at least four $2^-$-vertices since it is a triangle-free $K_4$-minor-free graph.

Suppose to the contrary that there is a graph $G$ and its vertices $x,y,z \in V(G)$ such that $G$ is not $(2,2,1)$-AT extendable with respect to $(x,y,z)$.
Let $G$ be the minimal graph with this property with respect to $|V(G)|$.
Let $w\in V(G)\setminus\{x,y,z\}$ such that $w$ is a $2^-$-vertex.
By the minimality of $G$, $G-w$ has a $(2,2,1)$-AT orientation $D'$ with respect to $(x,y,z)$.
Then we obtain an orientation $D$ of $G$ by starting with $D'$ and then orienting the edges incident with $w$ so that $d_D^-(w)=0$.
Then $D$ is a $(2,2,1)$-AT orientation of $G$, which is a contradiction.
\end{proof}

\subsection{$(2,2,2)$-AT extendability of graphs with bounded  maximum average degree}

\begin{definition} \rm 
A set of three distinct vertices $\{x,y,z\}$ of $G$ is {\em blocked} if either for every proper 3-coloring $\phi$ of $G$, $|\phi(\{x,y,z\})|=3$ or for every proper 3-coloring $\phi$ of $G$, $|\phi(\{x,y,z\})|=1$.
\end{definition}

Note that $G$ is not $(2,2,2)$-list extendable with respect to $(x,y,z)$ if $\{x,y,z\}$ is blocked. 
By using  \cref{cor:222}, it is easy to check that if $G$ is $K_4$-minor-free and $\{x,y,z\}$ is non-blocked, then $G$ is $(2,2,2)$-list extendable with respect to $(x,y,z)$. \Cref{fig:not222} shows that the condition that $G$ be $K_4$-minor-free cannot be simply dropped. Nevertheless, we prove that if $G$ has ${\rm mad}(G) < \frac {14}{5}$, then $G$ is $(2,2,2)$-list extendable with respect to $(x,y,z)$, provided that  $\{x,y,z\}$ is non-blocked;
recall that the \emph{maximum average degree} of $G$, denoted ${\rm mad}(G)$, is defined as $\max_{H \subseteq G} {2|E(H)| \over |V(H)|}$. Note that the graph in \Cref{fig:not222} has ${\rm mad}(G)=\frac {14}{5}$.

\begin{theorem}\label{thm:mad}
If $G$ is a graph with $\mad(G)<{14\over 5}$, then $G$ is $(2,2,2)$-AT extendable with respect to $(x,y,z)$ for every $\{x,y,z\}$ that is non-blocked.
\end{theorem}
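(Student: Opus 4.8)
The plan is to argue by contradiction through a minimal counterexample combined with a discharging argument, with the sparsity hypothesis $\mad(G)<\frac{14}{5}$ doing the combinatorial work and the non-blocked hypothesis handling the three constrained vertices. First I would record that $\mad(G)<\frac{14}{5}<3$ forces the degeneracy of $G$ to be at most $2$, so $G$ is $3$-colorable and the notion of a blocked triple is meaningful; I would also fix the target out-degree budgets, namely $d_D^+(v)\le 1$ for $v\in\{x,y,z\}$ and $d_D^+(v)\le 2$ otherwise. Let $G$ be a counterexample minimizing $|V(G)|$, together with a non-blocked triple $\{x,y,z\}$ for which no such AT-orientation exists.

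The backbone of the reductions is the elementary fact that if $D'$ is an AT-orientation of $G-w$ and we reinsert $w$ orienting all its edges away from $w$ (making $w$ a source) or all toward $w$ (making $w$ a sink), then $w$ lies on no directed cycle, so every Eulerian sub-digraph avoids $w$ and $\diff(D)=\diff(D')\ne 0$. The source move leaves every neighbor's out-degree unchanged but needs $d_G(w)\le f(w)-1$; the sink move always respects $w$'s own budget but raises each neighbor's out-degree by one. Using the source move, any $2^-$-vertex outside $\{x,y,z\}$ is immediately reducible: $G-w$ is still non-blocked (every proper $3$-coloring of $G$ restricts to one of $G-w$ with the same pattern on $\{x,y,z\}$, so both the $\le 2$-witness and the $\ge 2$-witness survive) and $\mad$ is monotone under subgraphs. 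Hence in $G$ every $2^-$-vertex lies in $\{x,y,z\}$, and together with $2$-degeneracy this pins down a rigid local structure that the discharging will exploit.

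I would then run a discharging argument with initial charge $\mu(v)=d_G(v)-\frac{14}{5}$, whose total is $2|E(G)|-\frac{14}{5}|V(G)|<0$. The rules would move charge from $3^+$-vertices to the remaining $2$-vertices (now all in $\{x,y,z\}$) and, more delicately, to the at most three special vertices. The constant $\frac{14}{5}$ is calibrated to be tight against the example in \Cref{fig:not222}: the extremal reducible-configuration-free neighborhoods should each end with charge exactly $0$, so that the absence of any reducible configuration forces total charge $\ge 0$, contradicting negativity. Making this close will require a short list of reducible configurations around $3$-vertices — for instance a $3$-vertex all of whose neighbors carry out-degree slack, removable by a sink move — and since these need $D'$ chosen with prescribed slack, the induction hypothesis likely has to be strengthened to track available out-degree at specified vertices.

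The main obstacle is twofold, and both parts concern the three constrained vertices. A special vertex of degree $2$ cannot be made a source (that would give out-degree $2>1$), while forcing one edge in and one edge out leaves the reinserted vertex possibly on a directed cycle, so $\diff$ is no longer automatically preserved; controlling how $\diff$ changes in this non-source/non-sink case, and doing so under the tight budget $1$ at $x,y,z$, is exactly where the non-blocked hypothesis must enter. Just as in the $K_4$-minor-free analysis behind \Cref{thm:k4minorfree} and \Cref{cor:222}, non-blockedness supplies a proper $3$-coloring with a prescribed color pattern on $\{x,y,z\}$, which I would convert into the required AT-orientation or into a certificate that the offending configuration is reducible. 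Reconciling the $\diff$ bookkeeping with the coloring certificate at degree-$2$ special vertices is the step I expect to be hardest; the remainder should be a calibrated but essentially routine discharging computation.
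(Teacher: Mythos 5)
Your setup (minimal counterexample, source-move reductions showing every $2^-$-vertex lies in $\{x,y,z\}$, and the observation that non-blockedness passes to $G-w$ by restricting colorings) matches the paper's opening moves, but the heart of your plan is missing, and by your own admission: you defer the treatment of degree-$2$ special vertices and the completion of the discharging to ``the step I expect to be hardest,'' which is precisely where a proof is required. The paper needs no discharging rules, no reducible configurations around $3$-vertices, and no strengthened induction tracking out-degree slack. Its key realization is that after the degree reductions, the raw count
\[
0 > \sum_{v\in V(G)}\bigl(5d_G(v)-14\bigr) \ \ge\ -4n_2+n_3+6n_4+11n_5^+,
\]
combined with $n_2\le 3$, forces the counterexample to be \emph{finite and tiny}: $n_5^+=0$, $n_4\le 1$, and $|V(G)|\le 8$ with one of three explicit degree sequences, namely $(4,3,3,3,3,2,2,2)$, $(3,3,3,3,3,3,2,2)$, or $(3,3,3,3,2,2)$. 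The proof then ends by brute force: contract the paths through $2$-vertices to get a multigraph $H^*$, note $|V(H^*)|\ge 4$, enumerate the handful of possible $H^*$, and exhibit explicit orientations whose total number of Eulerian sub-digraphs is odd (hence $\diff\neq 0$), lifting them back to $G$ by subdividing arcs into directed paths. Your worry about reinserting a degree-$2$ special vertex with one in-arc and one out-arc never arises, because $2$-vertices are never deleted and reinserted; they are absorbed into directed paths in this recovering process, under which Eulerian sub-digraphs of $D^*$ and of $D$ correspond bijectively.

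A second concrete omission: you never use \Cref{thm:k4minorfree}/\Cref{cor:222} to conclude that the minimal counterexample \emph{has} a $K_4$-minor. This step is essential in the paper. First, it disposes of the $K_4$-minor-free case outright (non-blocked implies feasible there, since a chain of diamonds would force $|\phi(\{x,y,z\})|=1$ in every coloring). Second, the presence of a $K_4$-minor is what drives the endgame: it guarantees $|V(H^*)|\ge 4$, rules out the Gallai-tree outcome when invoking the degree-AT theorem (\Cref{thm:degreeAT}) in the all-$3$-vertices subcase, and pins down the structure of $H^*$ in each degree-sequence case. Without this, your proposed certificate-from-colorings argument has nothing concrete to act on, and the proposal as written does not close.
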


\begin{figure}
\centering
\includegraphics[]{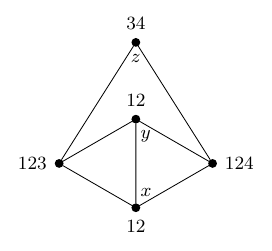}
    \caption{A graph with maximum average degree $\frac{14}{5}$ that is not $(2,2,2)$-list extendable with respect to $(x,y,z)$ that is non-blocked}
    \label{fig:not222}
\end{figure}

\section{Preliminaries}\label{sec:pre}

 If $u$ is a cut-vertex of a graph $G$, and $G_1,G_2$ are two induced connected subgraphs of $G$ with $V(G_1) \cap V(G_2) =\{u\}$ and  $V(G_1) \cup V(G_2) =V(G)$,
then we say $u$ {\em separates $G$ into  $G_1$ and $G_2$}.
For an orientation $D$ of $G$, let $A(D)$ denote its arc set.
 
\begin{lemma}\label{lem:vxcut}
For a graph $G$, let $u$ be a cut-vertex (of $G$) that separates $G_1$ and $G_2$.
For an orientation $D$ of $G$, let $D_i$ be the orientation of $G_i$ obtained by restricting $D$ onto $G_i$ for $i \in [2]$.
Then $\diff(D) = \diff(D_1) \times \diff(D_2)$.
\end{lemma}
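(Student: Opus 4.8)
The plan is to set up a sign-preserving bijection between the Eulerian sub-digraphs of $D$ and the pairs consisting of an Eulerian sub-digraph of $D_1$ together with one of $D_2$, and then read off the product formula from the generating-function form of $\diff$. The starting observation is that, because $u$ separates $G$ into $G_1$ and $G_2$, there is no edge of $G$ joining $V(G_1)\setminus\{u\}$ to $V(G_2)\setminus\{u\}$; hence every arc of $D$ lies in exactly one of $D_1,D_2$, and $A(D)$ is the disjoint union $A(D_1)\sqcup A(D_2)$. Consequently, for any spanning sub-digraph $F$ of $D$ we may write $F=F_1\cup F_2$ with $F_i=F\cap A(D_i)$, and $|A(F)|=|A(F_1)|+|A(F_2)|$.

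The heart of the argument, and the step I expect to be the main obstacle, is showing that $F$ is an Eulerian sub-digraph of $D$ if and only if each $F_i$ is an Eulerian sub-digraph of $D_i$. For every vertex $v\neq u$ all edges of $G$ incident to $v$ lie on the same side, so $F_i$ and $F$ agree on the in- and out-degrees of $v$ and the balance condition transfers immediately. The delicate case is the cut-vertex $u$: a priori an imbalance of $F_1$ at $u$ could be cancelled by an opposite imbalance of $F_2$ at $u$. To rule this out, I would apply conservation of flow to the vertex set $S=V(G_1)\setminus\{u\}$: summing $d_F^+(v)-d_F^-(v)=0$ over $v\in S$, all arcs internal to $S$ cancel, leaving exactly the number of $F$-arcs from $S$ to $u$ minus the number of $F$-arcs from $u$ to $S$. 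Since the only neighbours of $u$ in $G_1$ lie in $S$, these two quantities are precisely $d_{F_1}^-(u)$ and $d_{F_1}^+(u)$, so $d_{F_1}^+(u)=d_{F_1}^-(u)$; by symmetry the same holds for $F_2$. The converse direction is routine: if both $F_i$ are balanced, then at $u$ we have $d_F^{\pm}(u)=d_{F_1}^{\pm}(u)+d_{F_2}^{\pm}(u)$, so $F$ is balanced there as well, and elsewhere balance is inherited from the relevant $F_i$.

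With the bijection $F\leftrightarrow(F_1,F_2)$ in hand, the product formula is pure bookkeeping. Writing $\diff(D)=\sum_F(-1)^{|A(F)|}$, where the sum ranges over all Eulerian sub-digraphs of $D$, the identity $|A(F)|=|A(F_1)|+|A(F_2)|$ lets the sum factor:
\[
\diff(D)=\sum_{F_1}\sum_{F_2}(-1)^{|A(F_1)|+|A(F_2)|}=\Big(\sum_{F_1}(-1)^{|A(F_1)|}\Big)\Big(\sum_{F_2}(-1)^{|A(F_2)|}\Big)=\diff(D_1)\,\diff(D_2),
\]
where each $F_i$ ranges over the Eulerian sub-digraphs of $D_i$. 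Equivalently, one can expand $|\EEu(D)|=|\EEu(D_1)||\EEu(D_2)|+|\OEu(D_1)||\OEu(D_2)|$ and $|\OEu(D)|=|\EEu(D_1)||\OEu(D_2)|+|\OEu(D_1)||\EEu(D_2)|$ and subtract. This completes the plan.
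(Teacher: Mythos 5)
Your proposal is correct and follows essentially the same route as the paper: both decompose each Eulerian sub-digraph $F$ of $D$ as the arc-disjoint union $F_1\cup F_2$, establish balance at the cut-vertex $u$ by a double-counting/flow argument (the paper sums degrees over all of $V(F_1)$, you sum over $V(G_1)\setminus\{u\}$ --- the same computation), and then factor the signed sum over the resulting bijection $F\leftrightarrow(F_1,F_2)$. No gaps; the argument matches the paper's proof in substance.
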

\begin{proof}
For an orientation $D$ of $G$, let $D_i$ be the orientation of $G_i$ obtained by restricting $D$ onto $G_i$ for $i \in [2]$.
Let $F$ be an Eulerian sub-digraph of $D$, and let $F_i$ be the sub-digraph of $D_i$ obtained by restricting $F$ onto $D_i$ for $i \in [2]$.
Note that $\sum_{v \in V(F_1)}d^+_{F_1}(v) = \sum_{v \in V(F_1)}d^-_{F_1}(v)$, and $d^+_{F_1}(v) = d^-_{F_1}(v)$ for all $v \in V(F_1) \setminus \{u\}$.
Thus, $d^+_{F_1}(u) = d^-_{F_1}(u)$ and therefore $F_1$ is an Eulerian sub-digraph of $D_1$.
Similarly, $F_2$ is an Eulerian sub-digraph of $D_2$.
So $F$ is the   disjoint (with respect to arcs) union of $F_1$ and $F_2$. Conversely, for any Eulerian sub-digraph $F_1$ of $D_1$, and any  Eulerian sub-digraph $F_2$ of $D_2$, $F=F_1 \cup F_2$ is an Eulerian sub-digraph of $D$.
Note that $|A(F)|$ is even if $|A(F_1)|$ and $|A(F_2)|$ have the same parity, and $|A(F)|$ is odd if $|A
(F_1)|$ and $|A(F_2)|$ have different parities. 
Hence $\diff(D) = \diff(D_1) \times \diff(D_2)$.
\end{proof}

\begin{lemma}
    \label{lem:triangle}
    Assume $G$ is a graph,  $[u_1u_2u_3]$ is a triangle,
    $d_G(u_1)=2$, and $d_G(u_2)=3$ with $N_G(u_2)=\{u_1,u_3, u_4\}$. Let $D$ be an orientation of   
    $G$ in which the edges incident with $u_1$ or $u_2$ 
    are oriented as $(u_1,u_3), (u_2, u_1), (u_2, u_3), (u_4, u_2)$. 
    Let $D'= D-\{u_1, u_2\}$. Then $${\rm diff}(D) = {\rm diff}(D').$$ 
    In particular, $D$ is an AT-orientation if and only if $D'$ is an AT-orientation.
\end{lemma}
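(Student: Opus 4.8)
The plan is to analyze how the Eulerian sub-digraphs of $D$ restrict to Eulerian sub-digraphs of $D'=D-\{u_1,u_2\}$, and to set up a sign-preserving bijection (or a pairing argument) that matches the signed count $\diff(D)$ with $\diff(D')$. First I would observe the forced structure of the orientation at the low-degree vertices: since the edges incident with $u_1$ are oriented $(u_1,u_3)$ and $(u_2,u_1)$, the vertex $u_1$ has $d_D^+(u_1)=1$ and $d_D^-(u_1)=1$; similarly $u_2$ has out-arcs $(u_2,u_1),(u_2,u_3)$ and in-arc $(u_4,u_2)$, so $d_D^+(u_2)=2$ and $d_D^-(u_2)=1$. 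The key point is to understand, for any Eulerian sub-digraph $F$ of $D$, which of the four special arcs $(u_1,u_3),(u_2,u_1),(u_2,u_3),(u_4,u_2)$ can lie in $F$ given the balance condition $d_F^+=d_F^-$ at $u_1$ and $u_2$.

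The main technical step is the local analysis at $u_1$ and $u_2$. At $u_1$, balance forces that $F$ contains the in-arc $(u_2,u_1)$ if and only if it contains the out-arc $(u_1,u_3)$; call this the ``$u_1$ uses both or neither'' dichotomy. At $u_2$, whose arcs in $D$ are the two out-arcs $(u_2,u_1),(u_2,u_3)$ and the single in-arc $(u_4,u_2)$, balance ($d_F^+(u_2)=d_F^-(u_2)$) together with the $u_1$-dichotomy should force that the arcs of $F$ incident with $\{u_1,u_2\}$ fall into exactly two admissible configurations: either none of these four arcs is used, or all of $(u_1,u_3),(u_2,u_1),(u_4,u_2)$ are used while $(u_2,u_3)$ is not (and one checks no other combination balances at both $u_1$ and $u_2$). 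Writing $F' = F \cap D'$, in the first configuration $F'$ is exactly $F$ restricted to $D'$ with no change, and in the second configuration $F$ uses three extra arcs around the triangle, all passing through $u_1,u_2$ and landing on $u_3,u_4$. I would then verify that in the second configuration $F'=F-\{(u_1,u_3),(u_2,u_1),(u_4,u_2)\}$ is a legitimate Eulerian sub-digraph of $D'$: the net effect on $u_3$ is the removal of one in-arc, on $u_4$ the removal of one out-arc, so I must check these deletions preserve balance at $u_3$ and $u_4$ — which they do precisely because the three removed arcs form a directed path $u_4\to u_2\to u_1\to u_3$ through the deleted vertices, contributing exactly one in-arc at $u_3$ and one out-arc at $u_4$ to be removed. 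Hence the natural correspondence $F \mapsto F'$ is a bijection between Eulerian sub-digraphs of $D$ and of $D'$.

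Finally I would track parity. In the ``none used'' configuration $|A(F)|=|A(F')|$, so the parity is unchanged; in the ``three used'' configuration $|A(F)|=|A(F')|+3$, so the parity flips. At first glance the parity flip in the second case threatens to break the equality $\diff(D)=\diff(D')$, so the crux is to see that the second configuration contributes zero to $\diff(D)$ on its own, or more carefully, that the two configurations pair up. The cleaner route is to pair, for each fixed Eulerian sub-digraph $H$ of $D'$, the (at most) two Eulerian sub-digraphs of $D$ that restrict to $H$ under the correspondence and compare their signs. The expected main obstacle is exactly this sign bookkeeping: I must show that the three-arc insertion is available for a given $H$ if and only if certain in/out-arcs at $u_3,u_4$ are present in $H$, and that whenever both the ``insert nothing'' and ``insert three arcs'' options exist for the same base behavior they would cancel — but since the two options are associated with different restrictions $H$ (they change the arc content at $u_3,u_4$), they do \emph{not} in fact collide, and the bijection $F\mapsto F'$ is genuinely one-to-one. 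The correct conclusion is that each Eulerian sub-digraph $H$ of $D'$ has a \emph{unique} preimage under $F\mapsto F'$, whose sign relative to $H$ is governed by whether $H$ contains the relevant in-arc at $u_3$ and out-arc at $u_4$; summing over $H$, the $+3$ parity shifts occur in matched pairs whose net contribution reproduces $\diff(D')$ exactly, giving $\diff(D)=\diff(D')$. The final sentence of the lemma, that $D$ is an AT-orientation iff $D'$ is, then follows immediately since $\diff(D)=\diff(D')$ means one is nonzero iff the other is.
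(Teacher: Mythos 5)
Your local analysis at $\{u_1,u_2\}$ misses a case, and the gap propagates into a false key claim. At $u_2$, the balance condition allows the in-arc $(u_4,u_2)$ to be paired with \emph{either} out-arc, so there are three admissible configurations, not two: (i) none of the four special arcs is used; (ii) $(u_4,u_2)$ and $(u_2,u_3)$ are used and $u_1$ is untouched; (iii) $(u_4,u_2),(u_2,u_1),(u_1,u_3)$ are used. Your assertion that only (i) and (iii) can occur is wrong: configuration (ii) is balanced at $u_2$ (one in, one out) and vacuously balanced at $u_1$. Worse, your verification that in case (iii) the restriction $F'=F\setminus\{(u_4,u_2),(u_2,u_1),(u_1,u_3)\}$ is Eulerian in $D'$ is incorrect: deleting an \emph{open} directed path from $u_4$ to $u_3$ removes one out-arc at $u_4$ and one in-arc at $u_3$, so $F'$ becomes unbalanced at both endpoints (deletion preserves balance only for closed walks). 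Consequently $F\mapsto F\cap A(D')$ is not a bijection onto the Eulerian sub-digraphs of $D'$; the only Eulerian sub-digraphs of $D$ whose restriction is Eulerian in $D'$ are those of configuration (i). The sign bookkeeping in your final paragraph, which you yourself flag as the crux, cannot be repaired along these lines: a bijection that flips parity on some fibers would in general destroy the equality $\diff(D)=\diff(D')$, not establish it.

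The correct mechanism is cancellation, not bijection, and it is exactly where the missed configuration (ii) enters. Configuration (i) sub-digraphs are precisely the Eulerian sub-digraphs of $D'$ (with $u_1,u_2$ isolated), and they account for $\diff(D')$. Configurations (ii) and (iii) pair off under the involution that swaps the arc $(u_2,u_3)$ with the directed path $(u_2,u_1,u_3)$: both pieces enter the pair $\{u_1,u_2\}$ at $u_2$ via $(u_4,u_2)$ and exit into $u_3$, so the swap preserves balance everywhere and leaves the rest of $F$ untouched, while changing the number of arcs by exactly one and hence flipping parity. Therefore configurations (ii) and (iii) contribute $0$ to $\diff(D)$, and $\diff(D)=\diff(D')$ follows. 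This parity-reversing pairing on the sub-digraphs meeting $\{u_1,u_2\}$ is the paper's argument, and it is the idea your proposal is missing.
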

\begin{proof}
    Each Eulerian sub-digraph of $D'$ is an Eulerian sub-digraph of $D$ (with $u_1,u_2$ being isolated vertices).  On the other hand,  if $F$ is an Eulerian sub-digraph of $D$ but 
    $F-\{u_1,u_2\}$ is not an Eulerian sub-digraph of $D'$, then  $(u_4, u_2) \in F$, and exactly one of $P_1=(u_2,u_3)$ and $P_2 = (u_2,u_1,u_3)$ is contained in $F$. 
    For $i \in [2]$, let $\mathcal{E}_i$ be the Eulerian sub-digraphs of $D$ containing $P_i$. If $F \in \mathcal{E}_i$, then $F'=(F-P_i) \cup P_{3-i} \in \mathcal{E}_{3-i}$, and $F$ and $F'$ have different parities.   So the Eulerian sub-digraphs of $D$ that are not Eulerian sub-digraphs of $D'$ contributes $0$ to the difference ${\rm diff}(D)$ of $D$. Hence ${\rm diff}(D) = {\rm diff}(D').$
\end{proof}
 
 \begin{lemma}
     \label{lem-feasible}
     Assume $G$ is a $K_4$-minor-free graph and $X$ is a set of three vertices of $G$. If  there is a proper $3$-coloring $\phi$ of $G$ such that $|\phi(X)|=2$, then $X$ is feasible. 
 \end{lemma}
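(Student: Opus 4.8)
The plan is to unpack the definition of feasibility (\Cref{def-feasible}) and observe that the hypothesis already delivers one of its two requirements for free. Recall that $X$ is feasible precisely when (i) $X$ is not connected by a chain of diamonds and (ii) some proper $3$-coloring $\phi$ of $G$ satisfies $|\phi(X)| \le 2$. The hypothesis hands us a proper $3$-coloring $\phi$ with $|\phi(X)| = 2 \le 2$, so requirement (ii) holds immediately with this very $\phi$. Thus the entire content of the lemma reduces to verifying requirement (i): that $X$ is not connected by a chain of diamonds.

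For this, I would argue by contradiction, using \Cref{obs:1} in its contrapositive form. Suppose $X$ were connected by a chain of diamonds. Since $G$ is $K_4$-minor-free it is $3$-colorable (as noted in the introduction, and in any case the existence of $\phi$ already witnesses this), so \Cref{obs:1} applies and tells us that all three vertices of $X$ receive the same color in every proper $3$-coloring of $G$; in particular $|\phi(X)| = 1$. This contradicts the assumed $|\phi(X)| = 2$. Hence $X$ is not connected by a chain of diamonds, establishing (i).

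Combining (i) and (ii) shows that $X$ is feasible, which completes the argument. There is no substantive obstacle here: the only step requiring even a trivial thought is the contrapositive reading of \Cref{obs:1}, namely that a proper $3$-coloring assigning two distinct colors within $X$ certifies that $X$ cannot be connected by a chain of diamonds. The $K_4$-minor-free hypothesis enters solely to guarantee $3$-colorability, and even that is redundant given the supplied coloring $\phi$.
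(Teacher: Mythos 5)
Your proposal is correct and follows essentially the same route as the paper: both invoke \Cref{obs:1} (in contrapositive form) to conclude from $|\phi(X)|=2$ that $X$ is not connected by a chain of diamonds, and then note that the same coloring $\phi$ witnesses the second condition of \Cref{def-feasible}. The paper merely compresses this into two sentences; your expanded write-up adds no new ideas and no gaps.
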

 \begin{proof}
       Since $|X|=3$ and $|\phi(X)|=2$, by  \Cref{obs:1}, $X$ is not connected by a chain of diamonds. As $|\phi(X)|=2$,  $X$ is feasible. 
 \end{proof}

 \begin{corollary}
     \label{cor:xx'}
    Assume $G$ is a $K_4$-minor-free graph, $xx'$ is an edge of $G$, and $y,z \not\in \{x, x'\}$. Then at least one of the sets $\{x,y,z\}$ and $\{x',y,z\}$ is feasible.
 \end{corollary}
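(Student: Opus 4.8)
The plan is to exhibit, for one of the two triples $\{x,y,z\}$ and $\{x',y,z\}$, a single proper $3$-coloring of $G$ under which that triple receives \emph{exactly} two colors; \Cref{lem-feasible} then immediately yields feasibility of that triple. Since every $K_4$-minor-free graph is $3$-colorable (being $2$-degenerate), I would fix an arbitrary proper $3$-coloring $\phi$ of $G$ with color set $\{1,2,3\}$. The only structural input I need is that $xx'$ is an edge, so $\phi(x)\neq\phi(x')$.

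I would then split according to whether $\phi(y)=\phi(z)$. If $\phi(y)=\phi(z)$, call this common value $a$; since $\phi(x)\neq\phi(x')$, at most one of $x,x'$ is colored $a$, so some $w\in\{x,x'\}$ has $\phi(w)\neq a$, whence $|\phi(\{w,y,z\})|=2$. If instead $\phi(y)\neq\phi(z)$, then $\{\phi(y),\phi(z)\}$ consists of two colors and its complement in $\{1,2,3\}$ is a single color $c$. If both $\phi(x)=c$ and $\phi(x')=c$, this contradicts $\phi(x)\neq\phi(x')$; hence at least one of $x,x'$, say $w$, is colored from $\{\phi(y),\phi(z)\}$, and then $\phi(\{w,y,z\})=\{\phi(y),\phi(z)\}$ has size exactly $2$.

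In both cases one of $\{x,y,z\}$, $\{x',y,z\}$ admits a proper $3$-coloring using exactly two colors, so \Cref{lem-feasible} gives feasibility of that triple, completing the argument. I do not expect a genuine obstacle here: the whole argument is driven by the pigeonhole consequence of $\phi(x)\neq\phi(x')$ against a palette of three colors. The only point requiring care is to confirm that the triple attains exactly two colors (rather than one), since that is precisely the hypothesis of \Cref{lem-feasible}, which in turn invokes \Cref{obs:1} to rule out a chain of diamonds; the case analysis above is arranged so that this "exactly two" condition always holds.
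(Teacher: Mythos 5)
Your proposal is correct and follows essentially the same route as the paper's proof: fix an arbitrary proper $3$-coloring $\phi$, use the edge $xx'$ to get $\phi(x)\neq\phi(x')$, conclude by pigeonhole that one of $\phi(\{x,y,z\})$, $\phi(\{x',y,z\})$ has size exactly $2$, and invoke \Cref{lem-feasible}. The paper compresses your two-case analysis into a single sentence, but the underlying argument is identical.
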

 \begin{proof}
      Assume  $xx'$ is an edge of $G$ and $\phi$ is a proper 3-coloring of $G$. Since $\phi(x) \ne \phi(x')$, for any vertices $y,z \not\in \{x,x'\}$, at least one of $\phi(\{x,y,z\})$ and $\phi(\{x',y,z\})$ has size 2. Hence, by \cref{lem-feasible}, at least one of $\{x,y,z\}$ and $\{x',y,z\}$ is feasible.
 \end{proof}

\begin{observation}\label{obs:non-adjacent} \rm
 Assume $G$ is a connected $K_4$-minor-free graph with minimum degree $2$ that is not a cycle.    
 It is well-known~\cite{wald1983steiner} that   $G$ has a plane embedding where  two faces of $G$ each has an incident $2$-vertex. 
 In particular, $G$ has at least two non-adjacent $2$-vertices.  
\end{observation}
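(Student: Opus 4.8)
The plan is to take the embedding statement of~\cite{wald1983steiner} as given and spend the real effort upgrading ``two faces, each incident to a $2$-vertex'' to ``two \emph{non-adjacent} $2$-vertices.'' First I would reduce to the $2$-connected case. If $G$ has a cut-vertex, its block tree has at least two leaf blocks, and each leaf block $B$ is a $2$-connected $K_4$-minor-free graph that is not a single edge (a $K_2$ block would create a $1$-vertex, contradicting $\delta(G)=2$); applying the $2$-connected case (handled next) to $B$ yields a $2$-vertex of $G$ lying in $B$ but distinct from its unique cut-vertex, and two such vertices taken from two different leaf blocks are automatically non-adjacent. So it suffices to treat $2$-connected $G$.

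For $2$-connected $G$ I would fix the plane embedding provided by~\cite{wald1983steiner}, giving distinct faces $F_1\neq F_2$ each incident to a $2$-vertex, say $a$ on $F_1$ and $c$ on $F_2$, chosen distinct. Here every face is bounded by a cycle, and both edges at a degree-$2$ vertex of a face lie on that face's boundary cycle. If $a$ and $c$ are non-adjacent we are finished, so the whole difficulty is the case in which every two $2$-vertices of $G$ are adjacent; since $G$ is $K_4$-minor-free, the $2$-vertices then induce a clique of at most three vertices. A clique of three $2$-vertices is a component isomorphic to $C_3$ and forces $G=C_3$, contrary to hypothesis, so what remains is to rule out having just one $2$-vertex, or exactly two adjacent ones.

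The hard part, and the heart of the argument, is excluding these degenerate configurations, and I would do so through the \emph{core} $H$ obtained by suppressing every degree-$2$ vertex (replacing each maximal path of degree-$2$ vertices by a single edge). Then $H$ is a $2$-connected $K_4$-minor-free multigraph with $\delta(H)\ge 3$. Since every simple $K_4$-minor-free graph is $2$-degenerate and hence has a vertex of degree at most $2$, the multigraph $H$ cannot be simple, so it has parallel edges; these correspond to internally disjoint paths of $G$ between the same pair of vertices, and the simplicity of $G$ forces at least one of them to be subdivided, i.e.\ to contain an internal $2$-vertex. To finish I would show that one in fact obtains two $2$-vertices on \emph{distinct} ears, separated by vertices of degree at least $3$: either $H$ reduces (after merging parallel classes) to a dipole on two vertices joined by at least three parallel edges, giving at least two subdivided internally disjoint paths, or $H$ has two distinct split parallel classes; in every case the two resulting $2$-vertices are non-adjacent, contradicting the assumption that all $2$-vertices are mutually adjacent. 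The main obstacle I anticipate is exactly this last bookkeeping—guaranteeing two ears rather than one—for which the dipole serves as the extremal case.
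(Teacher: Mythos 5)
First, a point of comparison: the paper does not actually prove this observation; it cites \cite{wald1983steiner} for the embedding statement and asserts the non-adjacency consequence as immediate, so your attempt at a self-contained argument is necessarily a different (and more ambitious) route. Most of your reductions are sound: the block-tree reduction (though a leaf block that is a cycle needs a one-line separate treatment, since your $2$-connected case excludes cycles), the reduction to the case where all $2$-vertices of $G$ are pairwise adjacent, the clique analysis showing three mutually adjacent $2$-vertices force $G=C_3$, and the passage to the suppressed core $H$, which is indeed a $2$-connected, $K_4$-minor-free multigraph of minimum degree at least $3$ that cannot be simple. (You should also note $H$ is loopless --- a loop would force a cut-vertex of $G$ --- since otherwise ``not simple'' need not mean ``has parallel edges.'')

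The genuine gap is exactly the step you flag as the anticipated obstacle, and it is not mere bookkeeping. You must rule out that $H$ has at least three vertices and exactly one parallel class, consisting of exactly two parallel edges --- equivalently, that all of $G$'s (one or two, mutually adjacent) $2$-vertices lie on a single ear. None of the tools you invoke handles this case. Two-degeneracy of simple $K_4$-minor-free graphs yields only a single vertex of degree at most $2$, and edge-counting is inconclusive: $\delta(H)\ge 3$ forces $|E(H)|\ge \tfrac{3}{2}|V(H)|$, while one repeated edge permits $|E(H)|\le 2|V(H)|-2$, and these are compatible whenever $|V(H)|\ge 4$. What is actually needed is the stronger fact that a simple $K_4$-minor-free graph on at least four vertices has two \emph{non-adjacent} vertices of degree at most $2$: in the bad case the underlying simple graph $H_s$ of $H$ has degree at least $3$ everywhere except possibly at the two endpoints $u,w$ of the unique parallel class, and $u,w$ are adjacent, so such a pair gives an immediate contradiction. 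But that stronger fact is essentially the observation you are proving, so as written your argument reduces the statement to an unproved claim of the same depth. Two honest ways to close the gap: complete $H_s$ to a $2$-tree and invoke Dirac's theorem that every non-complete chordal graph has two non-adjacent simplicial vertices; or restructure the whole proof as an induction on $|V(G)|$ and apply the induction hypothesis to $H_s$, which is $2$-connected, $K_4$-minor-free, of minimum degree $2$, not a cycle, and strictly smaller than $G$ (it arises by suppressing at least one $2$-vertex), whence its only candidates for two non-adjacent $2$-vertices are the adjacent pair $u,w$ --- a contradiction. With either repair your approach goes through; without one, the crux is missing.
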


 \begin{definition}
     \label{def:genuine} \rm
     Assume $G$ is a connected $K_4$-minor-free graph with minimum degree $2$ that is not a cycle.  
 Assume $v$ is a $2$-vertex on a triangle $[vu_1u_2]$, and if $u_i$ is a $3$-vertex, then let $w_i\in N_G(u_i)\setminus\{v,u_{3-i}\}$.
We say $v$ is {\em genuine} if (1) or (2) holds:
\begin{enumerate}[(1)]
\item
$d_G(u_i)=3$ and $w_iu_{3-i}$ is an edge of $G$ for some $i\in[2]$.
\item 
$d_G(u_1)=d_G(u_2)=3$ and $w_1w_2$ is an edge of $G$.
\end{enumerate}
 \end{definition}

\Cref{fig:genuine} is an illustration of genuine vertices,  where a vertex represented by a square in the figure indicates that its degree is unspecified.

\begin{figure}
    \centering
    \includegraphics{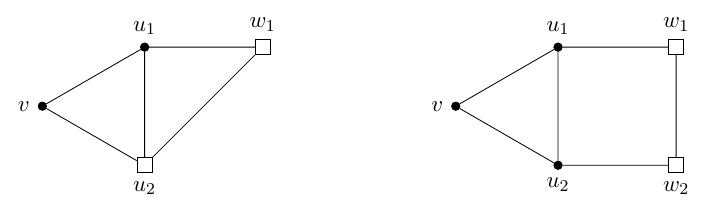}
    \caption{Figures for a genuine vertex $v$}
    \label{fig:genuine}
\end{figure}

\begin{lemma}
    \label{lem:genuine}
     Assume $G$ is a connected $K_4$-minor-free graph with minimum degree $2$ that is not a cycle, and $G$ has at most three $2$-vertices.
     If $G$ has only two $2$-vertices, then both  are   genuine $2$-vertices. If $G$ has 
     exactly three $2$-vertices, then at least one of them is a genuine $2$-vertex. 
\end{lemma}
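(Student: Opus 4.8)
The plan is to induct on $|V(G)|$. By \Cref{obs:non-adjacent}, $G$ has at least two non-adjacent $2$-vertices, so in either case of the lemma $G$ has exactly two or exactly three $2$-vertices. The workhorse of the argument is a single reduction: if $v$ is a $2$-vertex whose neighbors $a,b$ are non-adjacent (so $v$ lies on no triangle), then contracting the edge $av$ yields $G'=G-v+ab$, which is simple, connected, $K_4$-minor-free (being a contraction of $G$), of minimum degree $2$, and not a cycle; moreover the degrees of $a$ and $b$ are unchanged, so $G'$ has exactly one fewer $2$-vertex than $G$.

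First I would use this to force $2$-vertices onto triangles in the tight case. If $G$ has exactly two $2$-vertices and one of them, $v$, lies on no triangle, then $G'$ is a non-cycle with a single $2$-vertex, contradicting \Cref{obs:non-adjacent}. Hence when $G$ has exactly two $2$-vertices, both lie on triangles. A similar budget argument pins down the neighbors: if $v$ lies on a triangle $[vab]$ and some neighbor, say $a$, had degree $2$ or degree at least $4$, then (using that $G$ is $K_4$-minor-free, hence series--parallel on each block) the piece hanging off $a$ would either make $a$ a second $2$-vertex adjacent to $v$, violating \Cref{obs:non-adjacent}, or contribute a further $2$-vertex, exceeding the budget; thus $d_G(a)=d_G(b)=3$ and the hypotheses of \Cref{def:genuine} are exactly in force.

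With $d_G(a)=d_G(b)=3$ and $w_a,w_b$ their third neighbors, I would show $v$ is genuine. If not, then $w_a\neq w_b$ and none of $w_ab$, $w_ba$, $w_aw_b$ is an edge, so the two branches leaving the triangle at $w_a$ and $w_b$ are separated. Tracing these branches through the series--parallel structure, $K_4$-minor-freeness prevents them from reconnecting in any way other than by eventually funnelling into the unique remaining $2$-vertex $v'$; carrying this out forces $G$ to look like a chain of diamonds with $v$ at one end, whence $v$ in fact satisfies condition~(1) of \Cref{def:genuine} --- exactly as the end-vertex $u_0$ of $D_n$ does. The same reasoning applied to $v'$ settles the two-vertex case.

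For exactly three $2$-vertices, I would first dispose of the presence of a non-triangle $2$-vertex $v$: contracting $av$ as above produces a graph with exactly two $2$-vertices, both genuine by the case just proved, and since the contraction alters adjacencies only at $a,b$ one checks that a genuine witness survives in $G$. In the remaining case all three $2$-vertices lie on triangles, and applying the local analysis to one chosen (via \Cref{obs:non-adjacent}) to be non-adjacent to the other two produces a genuine vertex. The main obstacle is the local-to-global step: converting ``few $2$-vertices plus no $K_4$ minor'' into the precise terminal-diamond adjacency demanded by \Cref{def:genuine}, together with verifying that genuineness transfers across the contraction. I would organize both as a short case analysis on the degrees of $a$ and $b$ and on whether their off-triangle neighborhoods coincide or are joined by an edge.
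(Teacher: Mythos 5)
The decisive step in your two-$2$-vertex case is false. You claim that if the $2$-vertex $v$ lies on a triangle $[vab]$, then neither $a$ nor $b$ can have degree at least $4$ (this ``would contribute a further $2$-vertex, exceeding the budget''), hence $d_G(a)=d_G(b)=3$. Counterexample: the $2$-tree on $\{v,a,b,w,c\}$ with edges $va, vb, ab, aw, bw, bc, wc$ is connected, $K_4$-minor-free, has minimum degree $2$, is not a cycle, and has exactly two $2$-vertices, namely $v$ and $c$, yet $d_G(b)=4$; no extra $2$-vertex appears anywhere. Here $v$ is still genuine, but via condition (1) of \Cref{def:genuine}: $d_G(a)=3$, its third neighbour is $w$, and $wb\in E(G)$. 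This is precisely why \Cref{def:genuine}(1) constrains only \emph{one} of the two triangle neighbours --- the square vertex in \Cref{fig:genuine} has unspecified degree. So the statement you steer towards (both neighbours have degree $3$, putting ``the hypotheses of \Cref{def:genuine} exactly in force'') both misreads the definition and is false, and that branch of the argument cannot be repaired by more care.

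Beyond this, the two steps that carry all the difficulty are asserted rather than proved. First, ``tracing these branches through the series-parallel structure \dots forces $G$ to look like a chain of diamonds'' is the entire content of the lemma; nothing in the proposal carries it out, and there is no reason a non-genuine configuration must funnel into that particular shape --- what you need is \emph{some} contradiction, not that structure. Second, in the three-$2$-vertex case, genuineness does not obviously pull back through your contraction: $G'=G-v+ab$ contains the new edge $ab$ and new triangles through the merged vertex, and the witness triangle or witness edge for a genuine $2$-vertex of $G'$ may use exactly these new adjacencies, in which case it corresponds to nothing in $G$; ``one checks'' hides a real gap. For contrast, the paper's proof avoids both issues with a different mechanism: it applies, simultaneously to every non-genuine $2$-vertex on a triangle, one of three tailored operations (delete $v$ when both neighbours are $4^+$-vertices; contract $vu_1,vu_2,u_1u_2,u_1w_1$ when both neighbours are $3$-vertices with $w_1\ne w_2$ and $w_1w_2\notin E(G)$; contract $vu_1,vu_2,u_1u_2$ when $u_1$ is a $3$-vertex, $u_2$ is a $4^+$-vertex and $w_1u_2\notin E(G)$), plus the edge contraction at non-triangle $2$-vertices, verifies that no new $2$-vertices are created, and then contradicts \Cref{obs:non-adjacent} in the reduced graph. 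Note that these operations are indexed exactly by the ways a triangle $2$-vertex can fail \Cref{def:genuine} --- including the both-neighbours-$4^+$ case that your degree claim wrongly excludes --- and genuineness never needs to be transferred backwards through a contraction.
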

\begin{proof}
    For each non-genuine $2$-vertex $v$ whose neighbors $u_1,u_2$ are $3^+$-vertices, if $u_1u_2$ is not an edge of $G$, then contract the edge $vu_1$. If $u_1u_2$ is an edge of $G$, then we do the following operation:
    \begin{enumerate}
        \item If   both $u_1,u_2$ are $4^+$-vertices, then delete $v$.
        \item If both $u_1,u_2$ are $3$-vertices, where $w_i\in N_G(u_i)\setminus\{v,u_{3-i}\}$ for $i \in [2]$,  $w_1\ne w_2$,  and $w_1w_2 $ is not an edge of $G$, then contract all the edges $vu_1,vu_2, u_1u_2, u_1w_1$. 
        \item If $u_1$ is a $3$-vertex and $u_2$ is a $4^+$-vertex, where $w_1\in N_G(u_1)\setminus\{v,u_2\}$, and $w_1u_2$ is not an edge of $G$, then contract all the edges $vu_1,vu_2, u_1u_2$.
    \end{enumerate}
    
    We denote by $G'$ the resulting graph. It follows from the construction that each non-genuine $2$-vertex of $G$ with two $3^+$-neighbors is not a vertex of $G'$, and no new $2$-vertices are created.  In other words,  every vertex of $G'$ has degree at least $3$, except genuine $2$-vertices of $G$ or $2$-vertices of $G$ with a $2$-neighbor in $G$. 
    Note that $G'$ is also a connected $K_4$-minor-free graph with minimum degree at least $2$ that is not a cycle, so that $G'$ also has at least two non-adjacent $2$-vertices by \cref{obs:non-adjacent}.

    If $G$ has only two $2$-vertices, which are non-adjacent by \cref{obs:non-adjacent}, and at least one them is not genuine, then $G'$ has at most one $2$-vertex, which is a contradiction. 
    Thus, if $G$ has only two $2$-vertices, then both are genuine $2$-vertices.
    
   If $G$ has exactly three $2$-vertices, and none of them are genuine $2$-vertices, then every $2$-vertex in $G'$ is a $2$-vertex that is adjacent to a $2$-vertex in $G$.
   Since $G$ has exactly three $2$-vertices, every $2$-vertex in $G'$ should be on the same face of $G'$, which is a contradiction to \cref{obs:non-adjacent}.
   Thus, if $G$ has exactly three $2$-vertices, then at least one of them is a genuine $2$-vertex.
\end{proof}

A {\it Gallai tree} is a graph in which every block is a complete graph or an odd cycle.
The following theorem is known as the {\it  degree-AT theorem}.

\begin{theorem}[The degree-AT theorem\cite{hladky2010brooks}]\label{thm:degreeAT}
Let $G$ be a connected graph.
If $G$ is not a Gallai tree, then $G$ has an AT orientation $D$ such that $d_D^-(v) \ge 1$ for each $v \in V(G)$.
\end{theorem}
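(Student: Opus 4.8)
The plan is to reduce to $2$-connected graphs via \Cref{lem:vxcut}, and then to handle the $2$-connected core by an ear-decomposition argument.

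First the block reduction. Since $G$ is not a Gallai tree, it has a block $B_0$ that is neither a complete graph nor an odd cycle; as $K_2$ is complete, $B_0$ is $2$-connected. Root the block--cut tree of $G$ at $B_0$. Every block $B\ne B_0$ has a distinguished vertex $r_B$, the cut-vertex joining $B$ to its parent block, and every cut-vertex of $G$ is an internal (non-root) vertex of exactly one block, namely its parent block, while being the root $r_B$ of each of its child blocks. Iterating \Cref{lem:vxcut} over the cut-vertices gives $\diff(D)=\prod_B \diff(D|_B)$, so it suffices to orient each block with nonzero $\diff$ and with in-degree at least $1$ at the appropriate vertices. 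For each non-root block $B$ I take an acyclic orientation with $r_B$ as the unique source: order $V(B)$ with $r_B$ first so that every other vertex has an earlier neighbor (possible since $B$ is connected), and orient each edge towards its earlier endpoint. This orientation is acyclic, so its only Eulerian subdigraph is the empty one and $\diff=1$, while every vertex except $r_B$ gets in-degree at least $1$. Because each cut-vertex is an internal vertex of its parent block, and since the vertices of $B_0$ are handled below, every vertex of $G$ ends up with in-degree at least $1$. Thus the theorem reduces to the following claim: every $2$-connected graph $B$ that is neither complete nor an odd cycle admits an orientation $D$ with $\diff(D)\ne 0$ and $d_D^-(v)\ge 1$ for all $v$.

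For the claim, if $B$ is an even cycle, orient it consistently: the only Eulerian subdigraphs are $\emptyset$ and $B$, both with an even number of arcs, so $\diff(D)=2$ and every in-degree is $1$. Otherwise $B$ is $2$-connected and not a cycle, hence contains a theta subgraph, whose three cycle-lengths cannot all be odd (their sum is even); so $B$ contains an even cycle $C_0$. Fix an open ear decomposition $B=C_0\cup P_1\cup\cdots\cup P_k$ starting from $C_0$. Orient $C_0$ as a consistent directed cycle, and orient each ear $P_i$, with endpoints $u_i,w_i$ in the previously built graph, as a directed path from $u_i$ to $w_i$. Each internal ear-vertex receives in-degree $1$ when it is introduced, each endpoint already had in-degree at least $1$, and each vertex of $C_0$ has in-degree $1$; since arcs are only ever added, the final orientation $D$ satisfies $d_D^-(v)\ge 1$ for every $v$. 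The seed contributes $\diff(C_0)=2$.

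The main obstacle is showing $\diff(D)\ne 0$. Since the internal vertices of an ear have in- and out-degree $1$, any Eulerian subdigraph either avoids an ear entirely or uses all of its arcs, which yields the recursion
\[
\diff(D_i)=\diff(D_{i-1})+(-1)^{\ell_i}N_i,
\]
where $D_i$ is the orientation after $i$ ears, $\ell_i=|E(P_i)|$, and $N_i=\sum_{F'}(-1)^{|A(F')|}$ ranges over the subdigraphs $F'$ of $D_{i-1}$ that consist of arc-disjoint directed cycles together with a single directed $w_i$-to-$u_i$ path. The delicate point is that this correction term can a priori cancel $\diff(D_{i-1})$. The heart of the argument (as in the cited work of Hladk\'y, Kr\'al', and Schauz) is to choose, at each step, the direction of $P_i$, and if necessary to adjust the orientation locally, so that the correction never cancels the running value: reversing an ear exchanges the $w_i$-to-$u_i$ flows for $u_i$-to-$w_i$ flows and flips the sign $(-1)^{\ell_i}$, which supplies the freedom needed to keep $\diff$ nonzero while preserving the in-degree condition established above. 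An alternative route to the same claim uses the classical Erd\H{o}s--Rubin--Taylor structure: choose non-adjacent vertices $a,c$ with a common neighbor $b$ such that $B-\{a,c\}$ is connected, and orient $B$ from a suitable vertex ordering; there the same difficulty resurfaces as the need to evaluate the signed count of directed cycles through the two edges at $b$. I expect this parity bookkeeping to be the only genuinely hard step, with the block reduction and the in-degree accounting being routine.
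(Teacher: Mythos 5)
Your block reduction and the in-degree bookkeeping are fine (and your acyclic ``source at $r_B$'' orientations of the non-root blocks, combined with iterating \cref{lem:vxcut} over the block--cut tree, correctly reduce everything to a single $2$-connected block that is neither complete nor an odd cycle). But the proposal has a genuine gap exactly where you flag it: you never prove that the ear directions can be chosen so that $\diff$ stays nonzero. Your recursion $\diff(D_i)=\diff(D_{i-1})+(-1)^{\ell_i}N_i$ is correct at the moment ear $P_i$ is attached, but the two correction terms corresponding to the two directions of $P_i$ count \emph{different} families of flows in $D_{i-1}$ (directed $w_i$-to-$u_i$ flows versus $u_i$-to-$w_i$ flows), and there is no relation forcing at least one of them to differ from $-\diff(D_{i-1})$ up to sign; both choices could cancel the running value, and ``adjust the orientation locally'' is not a specified operation. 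Since the whole content of the theorem is precisely that $\diff\neq 0$ can be achieved, asserting that the sign freedom ``supplies the freedom needed'' assumes the conclusion at its core step. (Note also that the paper itself does not prove this statement; it quotes it from Hladk\'y--Kr\'al'--Schauz \cite{hladky2010brooks}, so there is no in-paper proof to match, and your task was genuinely to supply the missing argument.)

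The missing idea, which is how the cited proof avoids your cancellation problem entirely, is to \emph{confine} all Eulerian sub-digraphs to a small core rather than tracking $\diff$ through an induction. Concretely: since $G$ is not a Gallai tree, one can find (via the Erd\H{o}s--Rubin--Taylor structure underlying the Gallai-tree characterization) a subgraph $H$ which is an even cycle, or a cycle with one chord of suitable parity, admitting an orientation $D_H$ with every in-degree at least $1$ and $\diff(D_H)\neq 0$ computed by hand (e.g.\ $\diff=2$ for a consistently oriented even cycle). Then order $V(G)\setminus V(H)$ so that every vertex has an earlier neighbor (earlier meaning in $H$ or earlier in the order), and orient every remaining edge from its earlier endpoint to its later endpoint. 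Every vertex outside $H$ receives an in-arc from an earlier neighbor, no arc re-enters $H$, and the orientation outside $H$ is acyclic; hence every directed cycle, and so every nonempty Eulerian sub-digraph, lies inside $H$, giving $\diff(D)=\diff(D_H)\neq 0$ with no recursion and no sign bookkeeping at all. Your ear-decomposition scheme, by contrast, would need a substantive new lemma controlling $N_i$ to be salvageable, and as written it does not constitute a proof.
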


 \section{Proof of  \Cref{thm:21}}

Assume $G$ is a $K_4$-minor-free graph and $x,y$ are distinct vertices of $G$. We prove by induction on the number of vertices of $G$ that $G$ is $(2,2)$-AT extendable with respect to $(x,y)$, and if $\{x,y\}$ is not connected by a chain of diamonds, then $G$ is $(2,1)$-AT extendable with respect to $(x,y)$. 

By induction, we may assume that $G$ is connected.
If $G$ is a subgraph of a cycle, then it is easily checked that for any distinct vertices $x, y$ of $G$, $G$ is $(2,1)$-AT extendable with respect to $(x,y)$. 
Thus, assume $G$ is not a subgraph of a cycle, and this implies that $G$ has at least four vertices. 

If $d_G(v) \le 2$ for some vertex $v \not\in \{x,y\}$, then by induction $G'=G-v$ has a $(2,2)$-AT orientation (if $\{x,y\}$ is not connected by a chain of diamonds, then $(2,1)$-AT orientation) $D'$ with respect to $(x,y)$. 
By orienting the edges incident with $v$ as out-arcs of $v$, we obtain a $(2,2)$-AT orientation (if $\{x,y\}$ is not connected by a chain of diamonds, then $(2,1)$-AT orientation) of $G$ with respect to $(x,y)$.
Thus, we may assume that all vertices other than $x,y$ are $3^+$-vertices.

If $d_G(x) =1$, then 
$G-x$ has a $(2,1)$-AT orientation with respect to $(z,y)$, where $z$ is a vertex of $G-x$ such that $\{z,y\}$ is not connected by a chain of diamonds.
This can be extended to a $(2,1)$-AT orientation of $G$ with respect to $(x,y)$ by orienting the edge incident with $x$ as an out-arc of $x$. 
Thus, we may assume that $x$ is a $2^+$-vertex.

Suppose $d_G(y) = 1$, and $N_G(y)= \{z\}$.
If $x = z$, then $G-y$ has a $(2,1)$-AT orientation with respect to $(w,x)$, where $w$ is a vertex of $G-y$ such that $\{w,x\}$ is not connected by a chain of diamonds. 
This can be extended to a $(2,1)$-AT orientation of $G$ with respect to $(x,y)$ by orienting the edge incident with $y$ as an in-arc of $y$.
If $x \neq z$, then $G-y$ has a $(2,2)$-AT orientation with respect to $(z,x)$, which can be extended to a $(2,1)$-AT orientation of $G$ with respect to $(x,y)$ by orienting the edge incident with $y$ as an in-arc of $y$.
Thus, we may assume that $y$ is a $2^+$-vertex.

Thus $G$ has minimum degree 2, and all vertices other than $x,y$ are  $3^+$-vertices. So both $x,y$ are genuine $2$-vertices of $G$ by Lemma~\ref{lem:genuine}.

As $x$ is  a genuine $2$-vertex of $G$, we may assume that $[xu_1u_2]$ is a triangle,   $d_G(u_1)=3$ and $N_G(u_1)=\{x,u_2,w_1\}$.    

\medskip
\noindent
{\bf Case 1.}
$u_2w_1$ is an edge of $G$.
\medskip

As $[xu_1u_2]$ is a triangle,   $\{x,w_1\}$ is connected by a chain of diamonds.  
If $w_1 \ne y$, then by induction $G'=G-\{x,u_1\}$ has a $(2,2)$-AT orientation $D'$ with respect to $(w_1,y)$. 
Add the arcs $(w_1,u_1),(u_1,x),(x,u_2),(u_1,u_2)$ to $D'$ to obtain an orientation $D$ of $G$. By \Cref{lem:triangle}, ${\rm diff}(D)={\rm diff}(D')$. Hence $D$ is a 
$(2,2)$-AT orientation of $G$ with respect to $(x,y)$. If $\{x,y\}$ is not connected by a chain of diamonds, then $\{w_1,y\}$ is not connected by a chain of diamonds. Hence we may assume that $D'$ is a $(2,1)$-AT orientation of $G'$ with respect to $(w_1,y)$, and therefore  $D$ is a 
$(2,1)$-AT orientation of $G$ with respect to $(x,y)$.

If $w_1=y$, then $\{u_2,y\}$ is not connected by a chain of diamonds. 
So by induction, $G'=G-\{x,u_1\}$ has a $(2,1)$-AT orientation $D'$ with respect to $(u_2,y)$. Add the arcs $(y,u_1)$, $(u_1,x)$, $(x,u_2)$, $(u_1,u_2)$ to $D'$ to obtain a $(2,2)$-AT orientation $D$ of $G$ with respect to $(x,y)$.  

\medskip
\noindent
{\bf Case 2.}
$u_2w_1$ is not an edge of $G$.
\medskip

By the definition of a genuine $2$-vertex,  $d_G(u_2)=3$, $N_G(u_2) = \{x,u_1,w_2\}$, and $w_1w_2$ is an edge of $G$. Then either $\{w_1,y\}$ or $\{w_2, y\}$ is not connected by a chain of diamonds (note that if $y=w_2$, then $\{w_1,y\}$ is not connected by a chain of diamonds). By symmetry, we may assume that $\{w_1, y\}$ is not connected by a chain of diamonds. Then by induction $G'=G-\{x,u_1\}$ has a $(2,1)$-AT orientation $D'$ with respect to $(w_1,y)$. 
Add the arcs $(w_1,u_1),(u_1,x),(x,u_2),(u_1,u_2)$ to $D'$. 
By  Lemma~\ref{lem:triangle}, the resulting orientation is a  $(2,1)$-AT   orientation  of $G$ with respect to $(x,y)$.   This completes the proof of  \Cref{thm:21}.

\section{Proof of  \Cref{thm:k4minorfree}}\label{sec:k4minorfree}

Assume $G$ is a $K_4$-minor-free graph and  $x,y,z \in V(G)$ are distinct vertices. Suppose to the contrary that $\{x,y,z\}$ is a feasible set, but $G$ is not $(2,2,2)$-AT extendable with respect to $(x,y,z)$.   
Let $G$ be such a graph with the minimum number of vertices. Then $G$ is connected, and for the same reason as in the proof of  \Cref{thm:21}, $G$ is not a subgraph of a cycle, and $d_G(x), d_G(y), d_G(z) \ge 2$,  and $d_G(v) \ge 3$ for each $v \in V(G) \setminus \{x,y,z\}$.   

Since $G$ is not a subgraph of a cycle, $G$ has at least four vertices. 
By \Cref{lem:genuine}, we may assume $x$ is a genuine $2$-vertex of $G$, and $[xu_1u_2]$ is a triangle, $d_G(u_1)=3$ and $N_G(u_1)=\{x,u_2,w_1\}$.  

\medskip
\noindent
{\bf Case 1.}
$u_2w_1$ is an edge of $G$.
\medskip

Then $\{x,w_1\}$ is connected by a chain of diamonds so that $\{w_1,y,z\}$ is feasible if $w_1 \not\in \{y,z\}$.

Suppose $u_1 \in \{y,z\}$. Say $u_1 = z$.
By \Cref{lem:genuine}, $y$ is a genuine $2$-vertex of $G$.
If $y \neq w_1$, then we may switch $x$ and $y$ so that $u_1 \neq z$.
Note that after switching, it is possible that $w_1 = z$.
If $w_1 = z$, then since $\{y,z\}$ is not connected by a chain of diamonds, by \Cref{thm:21}, $G' = G=\{x,u_1\}$ has a $(2,1)$-AT orientation $D'$ with respect to $(y,z)$.
By \Cref{lem:triangle}, $D'$ can be extended to a $(2,2,2)$-AT orientation $D$ of $G$, by adding the arcs $(z,u_1), (u_1,x), (x, u_2), (u_1,u_2)$ to $D'$, which is a contradiction.
Thus, $w_1 \neq z$.
In this case, $\{w_1, y, z\}$ is feasible in $G'=G-\{x,u_1\}$, and by minimality, $G'$ has a $(2,2,2)$-AT orientation $D'$ with respect to $(w_1,y,z)$. 
Add the arcs $(w_1,u_1)$, $(u_1,x)$, $(x,u_2)$, $(u_1,u_2)$ to obtain an orientation $D$ of $G$. 
By \Cref{lem:triangle}, ${\rm diff}(D)={\rm diff}(D')$. 
Hence $D$ is a $(2,2,2)$-AT orientation of $G$ with respect to $(x,y,z)$.  

If $y = w_1$, then by \Cref{thm:21}, $G' = G - \{x,y,z\}$ has a $(1)$-AT orientation $D'$ with respect to $(u_2)$.
Add the arcs $(u_2,y)$, $(y, z)$, $(z,u_2)$, $(u_2,x)$, $(x,z)$ to obtain an orientation $D$ of $G$.
Let $G''$ be a subgraph of $G$ induced by $\{x,y,z,u_2\}$, and $D''$ be an orientation of $G''$ obtained by restricting $D$ onto $G''$. 
By \Cref{lem:vxcut}, ${\rm diff}(D) = {\rm diff}(D') \times {\rm diff}(D'') \neq 0$.
Hence $D$ is a $(2,2,2)$-AT orientation of $G$ with respect to $(x,y,z)$.  

Suppose $u_1 \not\in \{y,z\}$.
If $w_1 \not\in \{y,z\}$, then 
$\{w_1,y,z\}$ is feasible in $G' = G - \{x,u_1\}$, and by minimality, 
$G'$ has a $(2,2,2)$-AT orientation $D'$ with respect to $(w_1,y,z)$. Add the arcs $(w_1,u_1)$, $(u_1,x)$, $(x,u_2)$, $(u_1,u_2)$ to  obtain an orientation $D$ of $G$. 
By  \Cref{lem:triangle}, ${\rm diff}(D)={\rm diff}(D')$. Hence $D$ is a 
$(2,2,2)$-AT orientation of $G$ with respect to $(x,y,z)$.  

If $w_1 \in \{y,z\}$, say $w_1=y$, then $\{y,z\}$ is not connected by a chain of diamonds. By  \Cref{thm:21}, $G'=G-\{x,u_1\}$ has a $(2,1)$-AT orientation $D'$ with respect to $(z,y)$. 
Add the arcs $(y,u_1),(u_1,x),(x,u_2),(u_1,u_2)$ to $D'$ to obtain $D$.
By  \Cref{lem:triangle}, $D$
is a $(2,2,2)$-AT   orientation  of $G$ with respect to $(x,y,z)$.

\medskip
\noindent
{\bf Case 2.}
$u_2w_1$ is not an edge of $G$.
\medskip

Then $d_G(u_2)=3$, $N_G(u_2) = \{x,u_1,w_2\}$ and $w_1w_2$ is an edge of $G$. 
If $\{u_1,u_2\} \cap \{y,z\} \neq \emptyset$, say $u_1 = z$, then by \Cref{lem:genuine}, $y$ is a genuine $2$-vertex, and we can switch $x$ and $y$ so that $\{u_1,u_2\} \cap \{y,z\} = \emptyset$.
Thus, we may assume that $\{u_1,u_2\} \cap \{y,z\} = \emptyset$.

If $\{y,z\} = \{w_1,w_2\}$, say $y=w_1,z=w_2$,
then by Theorem 
\ref{thm:21},   $G'=G-\{x,u_1, u_2\}$ has a $(2,1)$-AT orientation $D'$ with respect to $(z,y)$. 
Add the arcs $(y,u_1)$, $(u_1,x)$, $(x,u_2)$, $(u_1,u_2)$, $(u_2,z)$ to $D'$ to obtain a $(2,2,2)$-AT   orientation $D$ of $G$ with respect to $(x,y,z)$.  

If $ y,z  \notin  \{w_1,w_2\}$, then by  \Cref{cor:xx'}, $\{w_1,y,z\}$ or $\{w_2,y,z\}$ is feasible. By symmetry, we may assume that $\{w_1,y,z\}$ is feasible. Then  by minimality $G'=G-\{x,u_1,u_2\}$ has a $(2,2,2)$-AT orientation $D'$ with respect to $(w_1,y,z)$. By adding the arcs $(w_1,u_1)$, $(u_1,x)$, $(x,u_2)$, $(u_1,u_2)$, $(u_2,w_2)$, we obtain a $(2,2,2)$-AT orientation $D$ of $G$ with respect to $(x,y,z)$.  

Assume $|\{y,z\} \cap \{w_1,w_2\}|=1$, say $y=w_1$ and $z \notin \{w_1,w_2\}$. If $\{y,z\}$ is not connected by a chain of diamonds, then by  \Cref{thm:21}, $G'=G-\{x,u_1,u_2\}$ has a $(2,1)$-AT orientation $D'$ with respect to $(z,y)$. Add the arcs $(y,u_1),(u_1,x),(x,u_2),(u_1,u_2), (u_2,w_2)$ to $D'$ to obtain a $(2,2,2)$-AT   orientation $D$ of $G$ with respect to $(x,y,z)$.

If $\{y,z\}$ is connected by a chain of diamonds, then for any proper 3-coloring $\phi$ of $G'=G-\{x,u_1,u_2\}$, $\phi(y)=\phi(z)$ and hence $|\{\phi(w_2), \phi(y), \phi(z)\}|=2$. So $\{w_2, y,z\}$ is feasible, and by minimality, $G'$ has a $(2,2,2)$-AT orientation $D'$ with respect to $(w_2,y,z)$. 
Add the arcs $(w_2,u_2), (u_2,x), (x, u_1), (u_2,u_1), (u_1,y)$ to $D'$.
By \Cref{lem:triangle}, the resulting orientation $D$ is a $(2,2,2)$-AT orientation of $G$ with respect to $(x,y,z)$. This completes the proof of \Cref{thm:k4minorfree}.

\section{Proof of \Cref{thm:mad}}

Suppose to the contrary that there is a graph $G$ with $\mad(G) < \frac{14}{5}$, and  $G$ is not $(2,2,2)$-AT extendable with respect to a non-blocked triple $(x,y,z)$.
Let $G$ be a minimal graph with this property with respect to $|V(G)|$.
By \cref{cor:222}, $G$ has a $K_4$-minor.

\begin{claim}\label{clm:mad_degree}
Every vertex $v\in V(G)\setminus\{x,y,z\}$ is a $3^+$-vertex in $G$, and $x,y,z$ are $2^+$-vertices in $G$.
\end{claim}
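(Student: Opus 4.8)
The plan is to argue by minimality via vertex deletion, handling the two assertions of \Cref{clm:mad_degree} separately; throughout, recall that $f(x)=f(y)=f(z)=2$ and $f(v)=3$ for every other vertex, so a $(2,2,2)$-AT orientation needs out-degree at most $1$ at $x,y,z$ and at most $2$ elsewhere.

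\textbf{Vertices outside $\{x,y,z\}$.} First I would show $d_G(v)\ge 3$ for every $v\in V(G)\setminus\{x,y,z\}$. Suppose instead some such $v$ has $d_G(v)\le 2$, and set $G'=G-v$. Every subgraph of $G'$ is a subgraph of $G$, so $\mad(G')\le\mad(G)<\frac{14}{5}$. The key point is that deleting $v$ does not change the blocking status of $(x,y,z)$: since $v\notin\{x,y,z\}$ has at most two neighbors, every proper $3$-coloring of $G'$ extends to $G$ (a free color for $v$ always remains), while every proper $3$-coloring of $G$ restricts to one of $G'$; hence the color patterns attained on $\{x,y,z\}$ are identical in $G$ and $G'$, and $(x,y,z)$ is non-blocked in $G'$. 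By the minimality of $G$, $G'$ has a $(2,2,2)$-AT orientation $D'$ with respect to $(x,y,z)$. I would then orient each edge at $v$ as an out-arc of $v$; this leaves every other out-degree unchanged and gives $d_D^+(v)=d_G(v)\le 2=f(v)-1$. As all arcs at $v$ point out, $v$ lies in no Eulerian sub-digraph, so $\EEu(D)$ and $\OEu(D)$ correspond bijectively to $\EEu(D')$ and $\OEu(D')$; thus $\diff(D)=\diff(D')\ne 0$ and $D$ is a $(2,2,2)$-AT orientation of $G$, contradicting the choice of $G$.

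\textbf{The vertices $x,y,z$.} For the second assertion I would show $d_G(x)\ge 2$, the cases of $y,z$ being symmetric. Suppose $d_G(x)\le 1$. The same out-orientation trick applies: orienting the (at most one) edge at $x$ outward gives $d_D^+(x)=d_G(x)\le 1=f(x)-1$ and makes $x$ irrelevant to the Eulerian count, so it suffices to produce an AT-orientation of $G-x$ with out-degree at most $1$ at $y,z$ and at most $2$ elsewhere. To feed this into the minimality hypothesis I would recast it as a triple problem on $G-x$: choose a third vertex $w\notin\{y,z\}$ so that $(y,z,w)$ is non-blocked in $G-x$, apply minimality to get a $(2,2,2)$-AT orientation of $G-x$ with respect to $(y,z,w)$ (which in particular has out-degree at most $1$ at $y$ and $z$), and reattach $x$ as above to contradict the choice of $G$. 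In most cases a suitable $w$ is read off a single proper $3$-coloring $\phi$ of $G-x$: if $\phi(y)=\phi(z)$ take any $w$ with $\phi(w)\ne\phi(y)$, and if $\phi(y)\ne\phi(z)$ take any $w\ne y,z$ with $\phi(w)\in\{\phi(y),\phi(z)\}$; either way $|\phi(\{y,z,w\})|=2$, so $(y,z,w)$ is forced neither to one color nor to three, i.e., it is non-blocked.

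\textbf{Main obstacle.} The crux is the degenerate subcase $\phi(y)\ne\phi(z)$ in which the color classes of $\phi(y)$ and $\phi(z)$ are exactly $\{y\}$ and $\{z\}$ in \emph{every} proper $3$-coloring of $G-x$ (all other vertices receiving the third color). There no choice of $w$ yields a non-blocked triple in $G-x$, so the triple reduction breaks down. I would instead treat $G-x$ directly in this rigid situation, exploiting that its $3$-coloring is essentially unique to build the required $(2,2)$-orientation by hand, or simply dispose of the finitely many small graphs where $|V(G-x)|<3$ forces this configuration. Making this corner case airtight—rather than the routine deletion arguments for the non-special vertices—is where the real work lies.
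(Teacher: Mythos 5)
Your proposal follows the same route as the paper's proof: delete a low-degree vertex, invoke minimality of $G$, and orient the deleted vertex's edges as out-arcs so that it lies in no Eulerian sub-digraph and $\diff$ is unchanged. For the first assertion you are in fact more careful than the paper, which silently assumes that $(x,y,z)$ remains non-blocked in $G-v$; your extension/restriction argument for that point is correct. For the second assertion your reduction is also the paper's (the paper simply posits a vertex $x'$ with $\{x',y,z\}$ non-blocked in $G-x$, with no justification), and your observation that a single coloring $\phi$ with $|\phi(\{y,z,w\})|=2$ already witnesses non-blockedness is right.

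The genuine gap is your ``main obstacle'' paragraph: the degenerate case is left unresolved, and neither fallback you offer works as stated. ``Build the required orientation by hand'' is not carried out, and ``dispose of the finitely many small graphs where $|V(G-x)|<3$'' mischaracterizes the situation -- the rigid configuration (the color classes of $\phi(y)$ and $\phi(z)$ being exactly $\{y\}$ and $\{z\}$) occurs in arbitrarily large graphs, e.g.\ $K_{2,m}$ plus the edge $yz$ for every $m$. The case is in fact vacuous, and the fact needed to see this is already available at this point of the paper: just before \cref{clm:mad_degree}, it is recorded via \cref{cor:222} that $G$ has a $K_4$-minor. Since $d_G(x)\le 1$, the vertex $x$ cannot participate in any $K_4$-minor model, so $G-x$ also has a $K_4$-minor. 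But in your degenerate case all vertices of $V(G-x)\setminus\{y,z\}$ receive one common color under a proper $3$-coloring, hence form an independent set, i.e.\ $\{y,z\}$ is a vertex cover of $G-x$; as vertex cover number is minor-monotone and $K_4$ has vertex cover number $3$, this contradicts $G-x$ having a $K_4$-minor. (The same argument disposes of your other failure case, where $G-x$ would have to be edgeless.) With this observation a suitable $w$ always exists and your proof closes; without it, the proposal stops short of proving the claim.
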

\begin{proof}
Suppose to the contrary that $v$ is a $2^-$-vertex in $V(G)\setminus\{x,y,z\}$. 
By minimality of $G$, $G-v$ has a $(2,2,2)$-AT orientation with respect to $(x,y,z)$. 
By orienting the edges incident with $v$ as out-arcs of $v$, we obtain a $(2,2,2)$-AT orientation of $G$ with respect to $(x,y,z)$, which is a contradiction.
Thus, every vertex in $V(G) \setminus \{x,y,z\}$ is a $3^+$-vertex in $G$.

Suppose to the contrary that $v$ is a $1^-$-vertex in $\{x, y,z\}$. 
Without loss of generality, let $v = x$.
Let $x'$ be a vertex such that $\{x',y,z\}$ is non-blocked in $G'=G-x$. 
By minimality of $G$, $G'$ has a $(2,2,2)$-AT orientation with respect to $(x',y,z)$.
By orienting the edge incident with $x$ as an out-arc of $x$, we obtain a $(2,2,2)$-AT orientation of $G$ with respect to $(x,y,z)$, which is a contradiction.
Thus, $x,y,z$ are $2^+$-vertices in $G$.
\end{proof}

Let $n_i$ and $n_i^+$  be the number of $i$-vertices and $i^+$-vertices, respectively, in $G$. By Claim~\ref{clm:mad_degree}, $n_0=n_1=0$ and so 
\[ 0>5 \mad(G)|V(G)| -14|V(G)| \geq \sum_{v \in V(G)} (5d_G(v)-14) \ge  -4n_2 + n_3 +6 n_4+ 11 n_5^+.\]
Let $B=\{w\in\{x,y,z\}: w\text{ is a $2$-vertex}\}$.
By Claim~\ref{clm:mad_degree}, 
\[ 12\ge 4|B| =4n_2> n_3 +6 n_4+ 11 n_5^+ \ge n_3^+= |V(G)\setminus B|\ge 5-|B|.\]
Thus $|B| \ge 2$ and  $n_5^+=0$. Then the vertices with odd degree  are all $3$-vertices, which implies that $n_3$ is even and so 
\begin{eqnarray} \label{eq:B}
&&12\ge 4|B| \ge  n_3 + 6n_4 + 2.\end{eqnarray}
This also implies that $n_4\le 1$.

In the following, we will find an orientation $D$ of $G$ such that $D$ is a $(2,2,2)$-AT orientation of $G$ with respect to $(x,y,z)$, that is, $\Delta^+(D)\le 2$, $x,y,z$ have at most one out-arc in $D$, and $\diff(D)\neq 0$. Let $H^*$ be the graph obtained from $G$ by contracting an edge incident with a 2-vertex one by one. 
Note that $H^*$ may have multiple edges or loops.
Since $G$ has a $K_4$-minor, $H^*$ also has a $K_4$-minor, and therefore $|V(H^*)|\ge 4$.
Given an orientation $D^*$ of $H^*$, obtaining an orientation $D$ of $G$ by the following  is called a {\em recovering process}: For $(u, w)\in A(D^*)$,
\begin{enumerate}[(i)]
\item if $u\neq w$, then an edge $uw$ of $H^*$ corresponds to a path $v_1\ldots v_k$ in $G$ with internal $2$-vertices where $v_1=u$ and $v_k=w$ and so   let $ (v_i,v_{i+1})\in A(D)$ 
for all $i \in [k-1]$;
\item if $u=w$, then the loop $uw$ of $H^*$ corresponds to a cycle $v_1\ldots v_k$ with internal $2$-vertices in $G$ where $v_1=v_k=u$, then let $(u,v_{k-1})\in A(D)$ and 
let $ (v_i, v_{i+1}) \in A(D)$ for all $i \in [k-2]$.
\end{enumerate}

\medskip
\noindent
\textbf{Case 1.} $|B|=3$, that is, $x,y,z$ are all $2$-vertices.

\medskip

If all vertices in $V(G)\setminus \{x,y,z\}$ are $3$-vertices, then since $G$ is not $(2,2,2)$-AT extendable with respect to $(x,y,z)$, by the degree-AT theorem (\Cref{thm:degreeAT}), $G$ is a Gallai tree such that every block must be an odd cycle or a $K_2$. This is a contradiction to the fact that $G$ has a $K_4$-minor. Thus $G$ has a unique $4$-vertex. 
By \eqref{eq:B}, $n_3\le 4$.
Since $|V(H^*)| \ge 4$, $n_3=4$, and so $|V(G)|= 8$. 
Then $G$ is a graph with degree sequence $(4,3,3,3,3,2,2,2)$, and $H^*$ is a graph with degree sequence $(4,3,3,3,3)$. 
Since $H^*$ has a $K_4$-minor, there are two vertices $v_1$ and $v_2$ of $H^*$ such that $V(H^*)\setminus\{v_1,v_2\}$ is a triangle and each vertex in $V(H^*)\setminus\{v_1,v_2\}$ has a neighbor in $\{v_1,v_2\}$. Let $V(H^*)\setminus\{v_1,v_2\}=\{v_3,v_4,v_5\}$. 
By the degree constraint, a loop or a multiple edge is incident only with $v_1$ or $v_2$ if it exists. 
There are three possible graphs for $H^*$, and for each case, we give an orientation $D^*$ of $H^*$ as in \Cref{fig:43333222_ori}. 

\begin{figure}[h!]
    \centering    
    \includegraphics{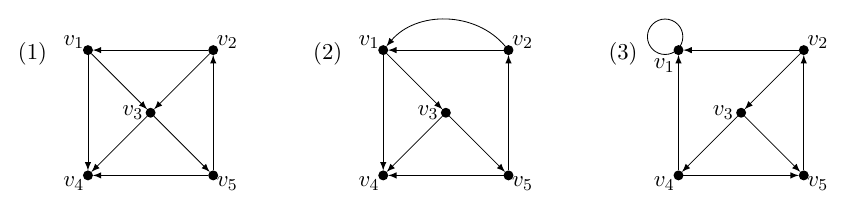}
    \caption{Orientations $D^*$ of $H^*$ when $G$ has degree sequence $(4,3,3,3,3,2,2,2)$}
    \label{fig:43333222_ori}
\end{figure}

Let $D$ be an orientation of $G$ obtained from $D^*$ by the recovering process. 
Note that $D^*-v_2v_5$ is acyclic, any nonempty Eulerian sub-digraph of $D^*$ contains the arc $v_2v_5$, and so there are exactly two nonempty Eulerian sub-digraphs in $D^*$. Thus $D$ also has an odd number of 
 Eulerian sub-digraphs and so $\diff(D)\neq 0$.

\medskip
\noindent
{\bf Case 2.} $|B|=2$
\medskip

We may assume that $x,y$ are $2$-vertices, and $z$ is a $3^+$-vertex in $G$. Since $n_3 + n_4\ge 3$, 
it follows from \eqref{eq:B} that $n_4=0$ and $n_3\in \{4,6\}$. Thus $|V(G)|\in \{6,8\}$.

\noindent
{\bf Case 2-1.} Suppose that $n_3=6$. 
Then $G$ is a graph with degree sequence $(3,3,3,3,3,3,2,2)$, and $H^*$ is a graph with degree sequence $(3,3,3,3,3,3)$.
If $H^*$ has a loop, then $G$ has a 3-cycle $[u_1u_2u_3]$ such that $u_1, u_2$ are 2-vertices and $u_3$ is a 3-vertex in $G$.
A subgraph of $G$ induced by $V(G) \setminus \{u_1,u_2,u_3\}$ has average degree $\frac{14}{5}$.
Since $\mad(G) < \frac{14}{5}$, $H^*$ has no loop.
Since $H^*$ has a $K_4$-minor, $H^*$ is one of graphs in \Cref{fig:33333322}: If $H^*$ has a multiple edge, then $H^*$ is  (1), and 
if $H^*$ is simple, then $H^*$ is a $2$-connected cubic graph and so $H^*$ is (2) or (3).

\begin{figure}[h!]
    \centering
    \includegraphics{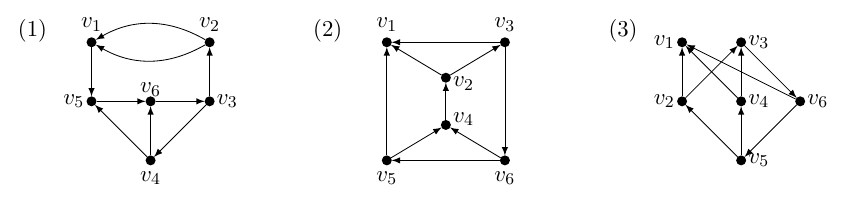}
    \caption{Orientations $D^*$ of $H^*$ when $G$ has  degree sequence $(3,3,3,3,3,3,2,2)$}
    \label{fig:33333322}
\end{figure}

By symmetry, we may assume that 
for (1), $z\in\{v_1,v_5,v_6\}$, and for (2) or (3),  $z=v_1$.    
Let $D$ be an orientation of $G$ obtained from the orientation $D^*$ of $H^*$  in \Cref{fig:33333322} by the recovering process.
In any case, $D^*-v_3v_6$ is acyclic and so any nonempty Eulerian sub-digraph of $D^*$ contains the arc $v_3v_6$. Then $D^*$ has exactly five Eulerian sub-digraphs for (1), and has exactly three Eulerian sub-digraphs for each of (2) and (3). Thus $D$ also has an odd number of 
 Eulerian sub-digraphs and so $\diff(D)\neq 0$.

\noindent
{\bf Case 2-2.} Suppose that $n_3=4$.
Then $G$ is a graph with degree sequence $(3,3,3,3,2,2)$, and $H^*$ must be $K_4$. Let $V(H^*)=\{v_1,v_2,v_3,v_4\}$. 
Since one $3$-vertex in $G$ must be $z$, we assume that $v_1=z$. Note that by the degree constraint, 
subdividing edges of $H^*$ twice makes  $G$, and so $H^*$ has at most two edges that are not edges of $G$.

\begin{figure}[h!]
    \centering
    \includegraphics{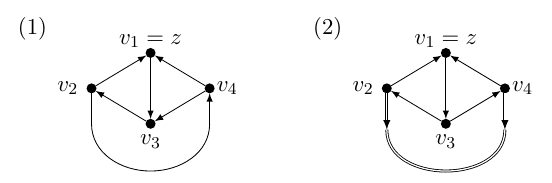}
    \caption{Orientations of $H^*$ when $G$ has degree sequence $(3,3,3,3,2,2)$}
    \label{fig:333322}
\end{figure}

Suppose that $v_2v_3,v_3v_4,v_2v_4$ are edges of $G$. Since $x,y,z$ is non-blocked, two edges of $H^*$ incident with $v_1$ are not edges of $G$.
We may assume that $v_1v_3$ and $v_1v_4$ are not edges of $G$. Let $D$ be the orientation of $G$ obtained from the orientation $D^*$ depicted in 
(1) of \Cref{fig:333322} by the recovering process. 
Then $D$ has exactly one odd Eulerian sub-digraph $v_2v_4v_3v_2$, and three even Eulerian sub-digraphs.  Thus $\diff(D)\neq 0$.

Suppose that one of $v_2v_3,v_3v_4,v_2v_4$ is not an edge of $G$, say $v_2v_4$ is not an edge. 
Let $D_0$ be the orientation depicted in (2) of \Cref{fig:333322}, where the double line shows a schematic representation of direction to define an orientation $D$ of $G$ using $D_0$. 
The path $v_2u_1\ldots u_tv_4$ of $G$ corresponding to $v_2v_4$ of $H^*$ is oriented so that $(v_2,u_1)$, $(v_4,u_t)$ are arcs of $D$ and $u_1\ldots u_t$ is a directed path. For the other edge of $H^*$ not in $G$, we naturally extend the orientation $D_0$ so that an arc of $D_0$ is a directed path in $D$. The resulting orientation $D$ of $G$ has three Eulerian sub-digraphs. Thus $\diff(D)\neq 0$.  
This completes the proof.

\section*{Acknowledgements} 

Eun-Kyung Cho was supported by the National Research Foundation of Korea (NRF) grant funded by the Ministry of Education (No. RS-2023-00244543 and No. RS-2023-00211670).
Ilkyoo Choi was supported in part by the Hankuk University of Foreign Studies Research Fund and by the Institute for Basic Science (IBS-R029-C1).
Boram Park was supported by the National Research Foundation of Korea(NRF) grant funded by the Korea government(MSIT) (No. RS-2025-00523206), and supported by the New Faculty Startup Fund from Seoul National University.
Xuding Zhu was supported by the National Natural Science Foundation of China, Grant numbers: NSFC 12371359, U20A2068.

\end{document}